\documentclass[a4paper,oneside]{amsart}
\usepackage{etoolbox}
\usepackage{amssymb,amsmath}
\usepackage{graphicx}
\usepackage{extarrows}
\usepackage{nicefrac}

\usepackage{thmtools}

\ifdef{\ebook}{
  \usepackage[paperwidth=9cm, paperheight=12cm, hmargin={0.17in, 0.17in}, vmargin={0.50in, 0.17in}]{geometry} \usepackage{kmath,kerkis}
}{}

\usepackage{longtable}

\usepackage{xypic}
\xyoption{curve}

\usepackage{tikz-cd}

\usepackage{booktabs}
\usepackage{color}
\usepackage{booktabs}
\usepackage{paralist}

\usepackage[draft,multiuser]{fixme}

\fxsetup{layout=inline}
\FXRegisterAuthor{p}{anp}{AP\!\!}

\FXRegisterAuthor{s}{ans}{HS} 
\FXRegisterAuthor{m}{anm}{MM} 

\declaretheorem[numberwithin=section,name=Theorem]{theorem}
\newtheorem{lemma}[theorem]{Lemma}
\newtheorem{proposition}[theorem]{Proposition}
\newtheorem{corollary}[theorem]{Corollary}
\theoremstyle{remark}
\newtheorem{remark}[theorem]{Remark}

\theoremstyle{definition}

\newtheorem{definition}[theorem]{Definition}
\newtheorem{example}[theorem]{Example}

\newtheorem{question}[theorem]{Question}

\newcommand{\CC}{\mathbb{C}}

\newcommand{\VV}{\mathbb{V}}
\newcommand{\QQ}{\mathbb{Q}}
\newcommand{\Z}{\mathbb{Z}}
\newcommand{\G}{\mathbb{G}}
\newcommand{\ZZ}{\mathbb{Z}}
\newcommand{\NN}{\mathbb{N}}
\newcommand{\PP}{\mathbb{P}}
\renewcommand{\P}{\mathbb{P}}
\newcommand{\A}{\mathbb{A}}

\newcommand{\K}{\mathbf{k}}
\newcommand{\KK}{\K}

\newcommand{\reg}{\mathrm{reg}}
\newcommand{\sing}{\mathrm{sing}}
\newcommand{\embed}{\hookrightarrow}
\def\C{\CC}
\def\Z{\ZZ}
\def\O{\mathcal{O}}
\def\P{\PP}
\newcommand{\fan}{\mathcal{S}}

\newcommand{\CO}{{\mathcal{O}}}

\renewcommand{\O}{{\mathcal{O}}}

\newcommand{\X}{{\mathfrak{X}}}
\newcommand{\sdeg}{\deg \fan}
\newcommand\0{\circ}

\DeclareMathOperator{\Ind}{Ind}

\DeclareMathOperator{\Sec}{Sec}
\DeclareMathOperator{\Tan}{Tan}

\DeclareMathOperator{\id}{id}
\DeclareMathOperator{\rep}{rep}
\DeclareMathOperator{\AffCone}{AffCone}
\DeclareMathOperator{\Cox}{\mathcal{R}}
\DeclareMathOperator{\SAut}{SAut}

\DeclareMathOperator{\lin}{lin}

\DeclareMathOperator{\tail}{tail}

\DeclareMathOperator{\im}{im}

\DeclareMathOperator{\Aut}{Aut}

\DeclareMathOperator{\supp}{supp}

\DeclareMathOperator{\conv}{conv}

\DeclareMathOperator{\Spec}{Spec}
\DeclareMathOperator{\Proj}{Proj}
\DeclareMathOperator{\Cl}{Cl}

\author[M. Micha\l ek]{Mateusz Micha\l ek}
\address{Polish Academy of Sciences, Mathematical Institut, \'{S}niadeckich 8, 00-956 Warszawa, Poland, Freie Universit\"at, Berlin, Germany}
\email{mmichalek@impan.pl}
\author[A. Perepechko]{Alexander Perepechko}
\thanks{The research of M.~Michalek was supported by  IP  grant
0301/IP3/2015/73 of the Polish Ministry of Science. The research of A.~Perepechko was carried out at the IITP RAS at the expense of the Russian 
Foundation for Sciences (project no. 14-50-00150).}
\address{Kharkevich Institute for Information Transmission Problems, 19 Bolshoy Karetny per., 127994 Moscow, Russia}  
\email{perepeal@gmail.com}
\author[H. S\"u{\ss}]{Hendrik S\"u\ss}
\address{
School of Mathematics,
The University of Manchester,
Alan Turing Building,
Oxford Road
Manchester M13 9PL}
\email{hendrik.suess@manchester.ac.uk}

\title[Flexible cones]{Flexible affine cones and flexible coverings}

\begin{document}
\begin{abstract}
  We provide a new criterion for flexibility of cones over varieties covered by flexible affine varieties. 
  We apply this criterion to prove flexibility of affine cones over secant varieties of Segre--Veronese embeddings and over certain Fano threefolds.
  We further prove flexibility of total coordinate spaces of Cox rings of del Pezzo surfaces.
\end{abstract}
\subjclass[2010]{14R20, 14J50}
\keywords{Automorphism group, transitivity, flexibility, affine cone, Cox ring, Segre--Veronese embedding, secant variety, del Pezzo surface}

\maketitle
Let $X$ be an affine variety over an algebraically closed field $\KK$ of characteristic zero.
We consider actions of the additive group $\G_a = (\KK,+)$ on $X$. 
The subgroup of $\Aut(X)$ generated by all $\G_a$-actions is called the \emph{special automorphism group} of $X$ and will be denoted by $\SAut(X)$. 
We are interested in transitivity of the $\SAut(X)$-actions on the smooth points of $X$. 
Recall that an action of a group $G$ on a set $M$ is called \emph{$m$-transitive} if for every two $m$-tuples $(x_1, \ldots, x_m)$ and $(x'_1, \ldots, x'_m)$ of pairwise distinct elements of $M$ 
there exists $g\in G$ such that $g.x_i = x_i'$ for $i=1, \ldots m$.

We have the following remarkable result.
\begin{theorem}[{\cite[Theorem~0.1]{AFKKZ}}]
\label{thm:AFKKZ}
Let $X$ be an irreducible affine variety of dimension $\ge 2$.
Then the following conditions are equivalent.
\begin{enumerate}
\item The group $\SAut(X)$ acts transitively on the regular locus $X_{\reg}$.
\item The group $\SAut(X)$ acts $m$-transitively on $X_{\reg}$ for every $m > 0$.
\item The tangent space of every $x \in X_{\reg}$ is spanned by tangent vectors to orbits of $\G_a$-actions.
\end{enumerate}
\end{theorem}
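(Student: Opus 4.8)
The plan is to prove the equivalences through the cycle $(iii)\Rightarrow(ii)\Rightarrow(i)\Rightarrow(iii)$; the implication $(ii)\Rightarrow(i)$ is just the case $m=1$, and the whole difficulty is concentrated in extracting infinite transitivity from the infinitesimal spanning condition. Throughout I write $X_{\mathrm{flex}}\subseteq X_{\reg}$ for the \emph{flexible locus}, the set of smooth points $x$ at which the tangent vectors to the orbits $H.x$ of $\G_a$-subgroups $H\le\SAut(X)$ span $T_xX$; this locus is open and $\SAut(X)$-invariant. For $(i)\Rightarrow(iii)$ I would argue that it suffices to see $X_{\mathrm{flex}}\ne\emptyset$, since invariance together with transitivity then forces $X_{\mathrm{flex}}=X_{\reg}$. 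Writing $\SAut(X).x_0$ as the union over finite words $w$ of the images of the flow-composition morphisms $\mu_w\colon\A^{N_w}\to X$, $(t_1,\dots,t_{N_w})\mapsto\exp(t_{N_w}\delta_{N_w})\circ\cdots\circ\exp(t_1\delta_1).x_0$, I would take a word whose image has maximal dimension; this image must be dominant, for otherwise the closure of a maximal-dimensional image would be a proper $\SAut(X)$-invariant subvariety, contradicting transitivity. Generic smoothness in characteristic zero then makes $d\mu_w$ surjective at a general parameter, and since by the chain rule every column of $d\mu_w$ is a push-forward of a $\G_a$-orbit tangent vector under later automorphisms, hence again a $\G_a$-orbit tangent vector, the corresponding image point is flexible.

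The substantive direction is $(iii)\Rightarrow(ii)$, for which I would import the mechanism of \emph{replicas}. If $H\le\SAut(X)$ corresponds to a locally nilpotent derivation $\delta$ and $f\in\ker\delta$, then $f\delta$ is again locally nilpotent and generates a $\G_a$-subgroup $H_f$; a collection of $\G_a$-subgroups is \emph{saturated} if it is closed under forming such replicas. The structural fact I would establish first is that whenever a subgroup $G\le\SAut(X)$ is generated by a saturated collection, every $G$-orbit is a locally closed subvariety whose tangent space at each point is exactly the span of the orbit tangent vectors of the generators; in particular a flexible orbit is open. Applying this to $G=\SAut(X)$, generated by the automatically saturated family of all $\G_a$-subgroups, each flexible orbit is open, and since distinct orbits are disjoint while open subsets of the irreducible $X$ always meet, under $(iii)$ the whole of $X_{\reg}$ is a single open $\SAut(X)$-orbit. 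This already yields $(i)$ and the base case $m=1$ of the induction below.

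To promote transitivity to $m$-transitivity I would induct on $m$. Given distinct tuples $(p_1,\dots,p_{m+1})$ and $(q_1,\dots,q_{m+1})$, the inductive hypothesis supplies $g\in\SAut(X)$ with $g.p_i=q_i$ for $i\le m$; writing $p=g.p_{m+1}$, it remains to find $h\in\SAut(X)$ fixing $q_1,\dots,q_m$ and carrying $p$ to $q_{m+1}$. For this I would select finitely many $\G_a$-subgroups, with derivations $\delta_1,\dots,\delta_k$, whose orbit tangents span $T_pX$, and for each $j$ an invariant $f_j\in\ker\delta_j$ with $f_j(q_i)=0$ for all $i$ and $f_j(p)\ne0$. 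The saturation of the replicas $f_j\delta_j$ generates a saturated subgroup $G'$ that fixes $q_1,\dots,q_m$ pointwise, and by the structural fact its orbit through $p$ is open, because the generator orbit tangents at $p$ are the nonzero multiples $f_j(p)\cdot v_{\delta_j}(p)$ and still span $T_pX$. Running the identical construction at $q_{m+1}$ makes $G'.q_{m+1}$ open as well; as two open orbits in the irreducible $X_{\reg}$ cannot be disjoint they coincide, so $q_{m+1}\in G'.p$ and the required $h\in G'$ exists.

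The main obstacle is exactly the existence of the separating invariants $f_j$. Since $\ker\delta_j$ consists of functions constant along the orbits of the corresponding subgroup, it cannot separate $p$ from any $q_i$ lying on the same orbit, and it need not be finitely generated. I expect to overcome this by a general-position argument: using the hypothesis $\dim X\ge2$, the tangent vectors at $p$ can be spanned by a family of $\G_a$-subgroups for which $p$ and each $q_i$ have distinct images under the corresponding quotient, and then for each such subgroup a suitable $f_j$ is obtained as a product of invariants normalised to vanish at the $q_i$ while remaining nonzero at $p$. The same dimension hypothesis is indispensable, as the example $X=\A^1$ shows: there $(iii)$ holds but $\SAut(\A^1)=\G_a$ is not even $2$-transitive, precisely because the only invariants of its generator are constants and no point can be fixed while another is moved.
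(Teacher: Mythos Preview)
The paper does not contain a proof of this theorem: it is quoted verbatim as \cite[Theorem~0.1]{AFKKZ} and used as a black box, so there is no ``paper's own proof'' to compare against. Your outline is, however, essentially the argument of the original source: the cycle $(iii)\Rightarrow(ii)\Rightarrow(i)\Rightarrow(iii)$, the passage to replicas $f\delta$ with $f\in\ker\delta$, the notion of a saturated generating family, the structural lemma that orbits of such a group are locally closed with tangent space spanned by the generators, and the induction on $m$ via an auxiliary subgroup fixing the already-matched points---all of this is exactly the machinery of Arzhantsev--Flenner--Kaliman--Kutzschebauch--Zaidenberg.

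The one place where your sketch is genuinely incomplete is the point you yourself flag: producing $f_j\in\ker\delta_j$ with $f_j(q_i)=0$ and $f_j(p)\ne0$. Your ``general-position'' remark points in the right direction but is not yet an argument. What is actually used is twofold. First, for a single $\G_a$-subgroup $H$ with derivation $\delta$, the invariant ring $\ker\delta$ separates any two points lying on distinct $H$-orbit closures; so if $p$ and each $q_i$ are on distinct $H$-orbits one takes a product of invariants, one per $q_i$, vanishing at $q_i$ but not at $p$. Second---and this is where $\dim X\ge2$ enters---if some $q_i$ happens to lie on the $H$-orbit of $p$, one replaces $H$ by a conjugate $gHg^{-1}$ with $g\in\SAut(X)$ chosen so that the new one-dimensional orbit through $p$ avoids the finite set $\{q_1,\dots,q_m\}$, while the tangent direction at $p$ is preserved (take $g$ fixing $p$). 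Since conjugates of $\G_a$-subgroups are again $\G_a$-subgroups, the spanning condition at $p$ is unaffected. Filling in this step would make your write-up a complete reproduction of the original proof.
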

If these equivalent conditions are fulfilled the affine variety $X$ is called \emph{flexible}. 
As examples of flexible varieties, let us mention affine cones over del Pezzo surfaces of degree 4 and 5  \cite{flexcones}, 
 over flag varieties, and affine toric varieties without torus factors \cite{flex-instances}. 

Our main theorem provides a new criterion for flexibility of affine cones, see Section~\ref{sec:cone-flex} for the proof.
\begin{restatable*}{theorem}{flexcrit}
\label{thm:flexible-criterion}
Let $Y$ be a projective variety covered by flexible normal open affine subsets $U_i$ such that $Y\setminus U_i=D_i$, 
where $D_i$ are effective divisors equivalent to a very ample divisor $H$. Then the affine cone $X= \AffCone_H Y$ is flexible.
\end{restatable*}

Recall that a Segre--Veronese variety is an embedding of $\PP^{d_1}\times\dots\times\PP^{d_n}$ by the very ample line bundle $\O(s_1)\boxtimes\dots\boxtimes\O(s_n)$. Further,
 given a projective variety $X\subset\PP^n$, the Zariski closure of the union of the secant (resp. tangent) lines to $X$ is called a \emph{secant} (resp. \emph{tangential}) variety of $X$. 
 As the first application of Theorem~\ref{thm:flexible-criterion},
We deduce the following result, see Section~\ref{sec:secant} for details.
\begin{restatable*}{theorem}{secflex}
\label{thm:secant-verones}
Let $X=v_{s_1}(\P(V_1))\times\dots\times v_{s_n}(\P(V_n))$ be a Segre--Veronese variety.
Then the affine cone over the secant variety of $X$ is flexible.
Further, if $s_1=\dots =s_n=1$, then also the affine cone over the tangential variety of $X$ is flexible.
\end{restatable*}

Our proof technique relies on linearization of the affine charts of the ambient projective space by triangular transformations. They are inspired by algebraic statistics, precisely by computation of cumulants \cite{ciliberto2014cremona, MOZ, ZwiernikSturmfels, zwiernik, zwiernikbook}. 

Section~\ref{sec:torus-actions} contains preliminaries on smooth rational T-varieties of complexity $1$. 
These are varieties $X$ with an effective action of a torus $T$, where $\dim X = \dim T+1$. 
Section~\ref{sec:tvars} is devoted to flexibility of affine cones over such varieties. 

In \cite{APS13} it was shown that smooth varieties of this type admit a toric covering and for certain cones over these varieties we, indeed, obtain flexibility. 
For example, this applies to all known Fano threefolds with 2-torus action.
We use below the list of Fano threefolds in Mori--Mukai's classification \cite{Mori1981}. 
\begin{restatable*}{theorem}{conefanos}
\label{thm:cones-over-Fanos}
  All affine cones over the Fano threefolds $Q$, $2.29$, $2.30$,
  $2.31$, $2.32$, $3.8$, $3.18$, $3.19$, $3.23$, $3.24$, $4.4$,
  and elements of the families $2.24$, $3.10$ admitting a 2-torus action in Mori--Mukai's classification are flexible.
\end{restatable*}
The main tool to obtain these results is the combinatorial description of T-varieties developed in \cite{pre05013675,1159.14025}, which in the case of complexity $1$ allows to study (torus equivariant) coverings as in Theorem~\ref{thm:flexible-criterion}.

While Theorems \ref{thm:secant-verones} and \ref{thm:cones-over-Fanos} are concerned with projective coordinate rings, 
in Section~\ref{sec:total-coord-spaces} of the paper we obtain related results for total coordinate rings or Cox rings.
\begin{restatable*}{theorem}{coxdPflex}\label{thm:del-pezzo-cox-flexible}
  The total coordinate spaces of smooth del Pezzo surfaces are flexible.
\end{restatable*}

This was known so far only for the toric del Pezzo surfaces (i.e. those of degree 9,8,7 and 6) and by \cite[Thm.~0.2]{flex-instances} for the case of degree 5, 
where the total coordinate space is known to be the affine cone over the Grassmannian $G(2,5)$. 
On the other hand, this extends a result of \cite{APS13}, where flexibility was proved only outside a subset of codimension $2$.
\begin{restatable*}{theorem}{coxTflex}\label{T1-flex}
  The total coordinate space of a complete smooth T-variety of complexity one is flexible.
\end{restatable*}

\subsection*{Acknowledgments}
We would like to thank Mikhail Zaidenberg for motivating questions and inspiring results and Ivan Arzhantsev for many useful remarks and suggestions. 
The first author started the project under Mobilnosc+ Polish Ministry of Science program, finished under DAAD PRIME program and was supported by the Foundation for Polish Science (FNP).
The second author is grateful to the Dynasty Foundation for a partial financial support.

\section{Flexibility of affine cones}\label{sec:cone-flex}
\begin{lemma}\label{covering-orbit}
Let $X\subset\A^{n+1}$ be the affine cone over a projective variety
$Y\subset\PP^n$, smooth in codimension $1$, which is not isomorphic to an affine space.
Consider a subgroupa subgroup $G\subset\Aut X$ such that
\begin{itemize}
\item the canonical $\K^*$-action maps $G$-orbits to $G$-orbits and
\item there is an orbit $Gx\subset X$,
 whose image under the projection $X\setminus\{0\}\to Y$ is the regular locus $Y_{\reg}\subset Y$.
\end{itemize}
 Then $Gx=X_{\reg}$.
\end{lemma}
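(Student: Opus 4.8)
The plan is to first pin down $X_{\reg}$ in terms of $Y$ and then reduce the statement to showing that the orbit $O:=Gx$ is invariant under the canonical $\K^*$-action. Since $Y$ is not an affine space, the vertex $0$ is a singular point of the cone, so $0\notin X_{\reg}$; away from the vertex the projection $\pi\colon X\setminus\{0\}\to Y$ is the geometric quotient by $\K^*$, a smooth $\K^*$-fibration whose fibres are punctured lines, so $X$ is smooth at a point $p\neq 0$ exactly when $Y$ is smooth at $\pi(p)$. Hence $X_{\reg}=\pi^{-1}(Y_{\reg})$. The hypothesis $\pi(O)=Y_{\reg}$ forces $O\subseteq X\setminus\{0\}$, and thus $O\subseteq X_{\reg}$. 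Because $\K^*$ acts transitively on each fibre of $\pi$ and $O$ meets every fibre over $Y_{\reg}$, we get $\K^*\cdot O=\pi^{-1}(Y_{\reg})=X_{\reg}$; consequently it suffices to prove that $O$ is $\K^*$-invariant, for then $O=\K^*\cdot O=X_{\reg}$.

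Next I would exploit the assumption that $\K^*$ permutes $G$-orbits. For $t\in\K^*$ the set $t\cdot O$ is again a $G$-orbit, and since $\pi(t\cdot O)=\pi(O)=Y_{\reg}$ these translates cover $X_{\reg}$ and are pairwise equal or disjoint. Let $\Gamma=\{t\in\K^*\mid t\cdot O=O\}$, a subgroup of $\K^*$. Fixing a fibre $L=\pi^{-1}(y_0)\cong\K^*$ over a point $y_0\in Y_{\reg}$ and identifying $L$ with $\K^*$, one checks that the trace $A:=O\cap L$ satisfies $t\cdot A=A$ if and only if $t\in\Gamma$, and that the translates $\{t\cdot A\}$ partition $\K^*$; a short computation then shows that $A$ is a single coset of $\Gamma$. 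The goal becomes $\Gamma=\K^*$.

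The heart of the argument, and the step I expect to be the main obstacle, is ruling out a proper $\Gamma$. If $\Gamma$ is infinite it is Zariski dense in $\K^*$, so $\overline{\Gamma\cdot p}$ contains the whole line through any $p\in O$, whence $\overline O=X$; using that the $G$-orbit is constructible (as is the case for the groups generated by $\G_a$-actions occurring in the applications), $O$ then contains a dense open subset of $X$, and if some translate $t\cdot O$ were a distinct orbit it would likewise contain a dense open subset, which is impossible in the irreducible $X$ since two dense opens must meet. Thus an infinite $\Gamma$ is automatically all of $\K^*$. The remaining, genuinely delicate, case is $\Gamma$ finite, say $\Gamma=\mu_n$: then $O$ is a finite multisection of $\pi$ over $Y_{\reg}$, so its image in $(X\setminus\{0\})/\mu_n$ is a section of the $\K^*$-bundle associated with $H^{\otimes n}$. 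Such a section trivialises $H^{\otimes n}$ on $Y_{\reg}$, and since $Y\setminus Y_{\reg}$ has codimension $\ge 2$ the restriction is injective on class groups, so $n[H]=0$ in $\Cl(Y)$. This contradicts the very ampleness of $H$, which is non-torsion. Therefore $\Gamma=\K^*$, the orbit $O$ is $\K^*$-invariant, and $O=X_{\reg}$, as claimed.
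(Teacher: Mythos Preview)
Your proof follows the paper's route: reduce to $\K^*$-invariance of $O$, introduce the stabiliser $\Gamma\subset\K^*$, and in the finite case pass to the quotient by $\Gamma$ to obtain a nowhere-vanishing section of a power of the hyperplane bundle over $Y_{\reg}$, which extends across the codimension-$\geq 2$ singular locus and contradicts non-triviality of $\O_Y(|\Gamma|)$. Two small remarks. First, since $Y$ is only assumed smooth in codimension~$1$ rather than normal, it is safer to phrase the last step in $\Pic$ (or, as the paper does, via the Cartier divisor of zeros/poles of a meromorphic section) rather than in $\Cl(Y)$. Second, you are in fact more explicit than the paper about the infinite-proper-$\Gamma$ case: the paper simply asserts that a non-open orbit has finite stabiliser and, earlier, that all $G$-orbits are closed in $X_{\reg}$, neither of which is justified for an arbitrary abstract subgroup $G\subset\Aut X$; this is the same regularity issue you flag with your constructibility remark, and the same hypothesis is what makes the ``section'' in the finite case an actual morphism rather than a set-theoretic one. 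Both arguments are complete once $O$ is known to be locally closed, as it is for the groups generated by $\G_a$-actions that are used in the applications.
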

\begin{proof}
As the action is equivariant, $X_{\reg}$ is a union of $G$-orbits, which projections coincide with $Y_{\reg}$.  Hence $X_{\reg}=\bigcup_{\lambda\in\G_m}\lambda Gx$, where all $G$-orbits are closed in $X_\reg$.

 Let us show that there exists an open $G$-orbit $Gx=X_{\reg}$. Assume the contrary. Then $\dim Gx = \dim Y$ and the stabilizer $S\subset\G_m$ of the orbit $Gx$ is finite.

  Denote by $X^\times$   the blow up of $X$ at $0$. It is the line bundle $\mathcal{O}_Y(-1)$ over $Y$.
  Consider a quotient morphism $$\mu\colon X^\times\xlongrightarrow{/S} X'= X^\times/S$$ given by $t\mapsto t^{|S|}$ on every trivialization chart, where $t$ is the coordinate of the 
  fibers. Then $\mu(Gx)$ intersects each fiber at most once, so it is a meromorphic, nonvanishing section of the line bundle $X'\to Y$. 
  But the subset $D\subset Y$, where the meromorphic section of the line bundle is not defined or zero, if non-empty, is a Cartier divisor, since it is defined locally by rational functions. 
  On the other hand, $D\subseteq Y_{\sing}$ is of codimension at least $2$. 
  Therefore, $D$ is empty and $\mu(Gx)$ is a global section of $X'$, but $X'$  is not a trivial bundle, as $Y$ is not a point. A contradiction.
 
     So, the group $G$ acts on $X_{\reg}$ transitively.
\end{proof}

\begin{remark}
We can weaken the second condition. Namely, if there is a $G$-orbit on $X$, whose image has complement of codimension $\ge2$ in $Y$,
then this orbit is an open subset.
\end{remark}

\begin{lemma}\label{lifting}
Let $Y$ be an irreducible projective variety embedded into $\PP^n$, and $U=Y \setminus \{x_0 = 0\}$ be a complement to a hyperplane section endowed with a $\G_a$-action $\phi\colon \G_a\times U\to U$. 
Let $\pi\colon X\setminus\{x_0=0\}\to U$ be a natural projection, where $X=\AffCone Y\subset \A^{n+1}$ is the affine cone over $Y$. 
Then $X$ admits a homogeneous $\G_a$-action $\hat\phi\colon \G_a\times X\to X$ such that
\begin{itemize}
\item $\phi(\G_a\times\{\pi(x)\})=\pi(\hat\phi(\G_a\times\{x\}))$ for any $ x\in\pi^{-1}(U)$.
\item $\hat\phi$ is trivial on $X\setminus\pi^{-1}(U)$.
\end{itemize}
In other words, $\pi\colon X\setminus\{x_0=0\}\to U$ provides an infinity-to-one correspondence between $\hat\phi$-orbits and $\phi$-orbits.
\end{lemma}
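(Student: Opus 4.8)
The plan is to pass to the language of locally nilpotent derivations and to produce the desired action as an explicitly rescaled extension of the one on $U$. Write $A=\mathcal{O}(X)$ for the homogeneous coordinate ring of $Y$; it is a domain, $\ZZ_{\ge 0}$-graded by the cone structure, and generated in degree $1$ by the images $x_0,\dots,x_n$ of the ambient coordinates, the grading being exactly the one induced by the canonical $\K^*$-action. Since $U=Y\setminus\{x_0=0\}$ is the affine chart $\{x_0\ne 0\}$, its coordinate ring is the degree-zero part $B:=\mathcal{O}(U)=(A[x_0^{-1}])_0$, generated by the dehomogenized coordinates $u_i=x_i/x_0$, and the projection $\pi$ corresponds to the inclusion $B\hookrightarrow A[x_0^{-1}]$. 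The given $\G_a$-action $\phi$ on $U$ is the same datum as a locally nilpotent derivation $\partial$ of $B$.

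First I would extend $\partial$ to the localization. Using $A[x_0^{-1}]=B[x_0,x_0^{-1}]$, with $x_0$ a unit of degree $1$, define a derivation $\tilde\partial$ of $A[x_0^{-1}]$ by letting it agree with $\partial$ on $B$ and setting $\tilde\partial(x_0)=0$. Then $\tilde\partial$ is homogeneous of degree $0$; it is locally nilpotent, because a power annihilating the finitely many $B$-coefficients of a Laurent expansion annihilates the whole element; and, crucially, $x_0$ lies in its kernel. However, $\tilde\partial$ need not preserve $A$: on a generator we have $\tilde\partial(x_i)=x_0\,\partial(u_i)\in A[x_0^{-1}]$, and $\partial(u_i)\in B$ may carry negative powers of $x_0$.

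The remedy is to clear denominators uniformly. Writing $\tilde\partial(x_i)=f_i/x_0^{e_i}$ with $f_i\in A$ homogeneous and $e_i\ge 0$, choose an integer $N\ge 1$ with $N\ge e_i$ for all $i$ and set $\hat\partial:=x_0^{N}\tilde\partial$. Then $\hat\partial(x_0)=0$ and $\hat\partial(x_i)=f_i x_0^{N-e_i}\in A$, so $\hat\partial$ sends every generator of $A$ into $A$; by the Leibniz rule it therefore restricts to a derivation of $A$, homogeneous of degree $N$ for the cone grading. Because $x_0^{N}$ lies in $\ker\tilde\partial$, the identity $(x_0^N\tilde\partial)^{m}=x_0^{Nm}\tilde\partial^{m}$ shows that $\hat\partial$ inherits local nilpotency from $\tilde\partial$. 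Hence $\hat\partial$ is a homogeneous locally nilpotent derivation of $A$ and defines the sought homogeneous $\G_a$-action $\hat\phi$ on $X$. I expect this step---arranging a single power of $x_0$ that simultaneously clears all denominators and keeps the derivation locally nilpotent---to be the crux; it works precisely because we extended by $\tilde\partial(x_0)=0$, which places $x_0$ in the kernel.

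It remains to check the two asserted properties, and both are now immediate. Over $X\setminus\{x_0=0\}$ the factor $x_0^{N}$ is an invertible element of $\ker\tilde\partial$, so $\hat\partial$ and $\tilde\partial$ have the same set-theoretic orbits there; as $\tilde\partial$ is the extension of $\partial$ along the inclusion $B\hookrightarrow A[x_0^{-1}]$, the projection $\pi$ carries each $\hat\phi$-orbit onto the corresponding $\phi$-orbit, giving the stated equality $\phi(\G_a\times\{\pi(x)\})=\pi(\hat\phi(\G_a\times\{x\}))$, and since the fibres of $\pi$ are one-dimensional $\G_m$-torsors this correspondence is infinity-to-one. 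For the second property, $N\ge 1$ gives $\hat\partial(A)\subseteq (x_0)$ while $\hat\partial(x_0)=0$, so the ideal $(x_0)$ is invariant and every higher power $\hat\partial^{k}(g)$ with $k\ge 1$ lies in $(x_0)$; thus $\hat\phi_t^{*}\equiv\id$ modulo $(x_0)$, that is, $\hat\phi$ is trivial on $X\setminus\pi^{-1}(U)=X\cap\{x_0=0\}$.
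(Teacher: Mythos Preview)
Your proof is correct and follows essentially the same route as the paper's: extend the locally nilpotent derivation on $B=\mathcal{O}(U)$ to $A[x_0^{-1}]$ by declaring $x_0$ to be in the kernel, then multiply by a sufficiently high power of $x_0$ (the paper writes $x_0^{d+1}$, you write $x_0^N$ with $N\ge\max e_i$ and $N\ge 1$) to land back in $A$ while guaranteeing triviality along $\{x_0=0\}$. Your write-up is in fact more explicit than the paper's in verifying the two bullet points, particularly the orbit correspondence via the invertibility of $x_0^N$ in $\ker\tilde\partial$ and the invariance of the ideal $(x_0)$.
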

\begin{proof}
Let $Y$ be defined by a homogeneous ideal $I\subset \K[x_0,\ldots,x_n]$ which does not contain $x_0$, $\K[X]=\K[x_0,\ldots,x_n]/I$, and $U=\{x_0\neq0\}\subset Y$.

There exists a natural embedding $\rho\colon U\embed X$, $\rho(U)=\{x_0=1\}\subset X$. 
On the other hand,  $\pi\colon X\setminus\{x_0=0\}\to U$ is a trivial $\G_m$-bundle. 
Therefore, we may extend the $\G_a$-action $\phi$ on $U$ to a $\G_a$-action $\tilde\phi$ on $X\setminus\{x_0=0\}$ defined by a homogeneous locally nilpotent derivation $\tilde\delta$.

There exists $d\in\NN$ such that $x_0^d\tilde\delta(x_i)\in\K[X]$ for $i=1,\ldots,n$. Since $x_0\in\ker\tilde\delta$, a homogeneous derivation $\hat\delta=x_0^{d+1}\tilde\delta$ on $X$ is locally nilpotent. 
The corresponding $\G_a$-action $\hat\phi$ is homogeneous, coincides with $\phi$ on $U\cong\{x_0=1\}\subset X$, and is trivial on $\{x_0=0\}\subset X$.
\end{proof}


\flexcrit
\begin{proof}
The case $X=\A^{n+1}$ is trivial. We assume in sequel that the origin is a
singular point of $X$.

For each element of the covering $U_i$ there exists a finite number of $\G_a$-actions $\{\phi_{ij}\}$ such that the orbit of the group generated by them is the regular locus of $U_i$.
For each action $\phi_{ij}$ we can consider a lifted action $\hat \phi_{ij}$ on $X$ as in Lemma~\ref{lifting}. 
Let a subgroup $\{\hat \phi_{ij}\}\subset\SAut X$ be generated by $\G_a$-actions corresponding to all open subsets $U_i$. 
Then the image of the orbit of a regular point $Gx\subset X_{\reg}$ under projection $X\setminus\{0\}\to Y$ is equal to $Y_{\reg}$. 
Thus, the statement follows from Lemma~\ref{covering-orbit}, as the variety $Y$ is normal, in particular smooth in codimension one.
\end{proof}




\begin{remark}
We note that in Theorem \ref{thm:flexible-criterion} it is enough to assume that there exists an effective $\QQ$-divisor $D'$ that has same support as $D$ and is equivalent to $H$. 
Indeed, let $k>0$ be such that $kD'$ is a ($\ZZ$-)divisor, then we obtain $\G_a$-actions on the affine cone over the Veronese embedding $v_k(Y)$. 
Following the proof of \cite[Corollary 3.2]{KPZ} one can lift those actions to the cone over $Y$ by \cite[Theorem 3.1]{MR2541410}.
\end{remark}

\begin{remark}
This criterion does not follow from \cite[Thm 6]{flexcones}. Indeed, the $\G_a$-actions on a flexible chart
correspond to cylinders that are not necessarily $H$-polar.
\end{remark}

\begin{example}
\label{exp:blowup}
Consider $\PP^n$ with coordinates $x_0, \ldots, x_n$ and a smooth subvariety $Y=\VV(x_0,f)$ of codimension $2$,
where $f$ is a homogeneous polynomial of degree $d$. 
Let $q \colon X \rightarrow \PP^n$ be the blowup in $Y$ and $q'\colon X' \rightarrow \PP^n$ the combined blowup in $Y$ and in the point $y=(1\mathbin:0:\cdots:0)$. 
We apply Theorem~\ref{thm:flexible-criterion} to show that all affine cones over $X$ and $X'$ are flexible. 
Note that for $d \leq n$ we obtain Fano varieties via this construction.

Let $U_i := \PP^n \setminus H_i := \PP^n \setminus \VV(x_i)$. 
The preimage $q^{-1}(U_0)$ is isomorphic to $U_0 \cong \A^n$, since $Y$ does not intersect $U_0$. 
For $i \neq 0$ the preimage $q^{-1}(U_i)$ is given by 
\[V\left(\frac{f}{x_i^d} \cdot u - \frac{x_0}{x_i} \cdot v\right) \quad \subset\quad  U_i \times \PP^1.\]
Hence, $q^{-1}(U_i)$ is covered by the affine charts 
\[U_i^0:=q^{-1}(U_i) \setminus [v=0] \text{ and } U_i^\infty:=q^{-1}(U_i) \setminus [u=0], \]
the first one being an affine space and the second one being isomorphic to 
\[V\left(\frac{f}{x_i^d} - \frac{x_0}{x_i} \cdot \frac{v}{u}\right) \subset \Spec k\left[\frac{x_0}{x_i},\ldots ,\frac{x_n}{x_i},\frac{v}{u}\right].\]
In the notation of \cite{flex-instances} this is a suspension over $\A^{n-1}$ and hence flexible by Theorem~0.2 loc. cit.

Given a divisor $D\subset\PP^n$, we denote by $\widetilde D$ its strict transform on the blowup.
We also denote by $E$ the exceptional divisor at $Y$ and by $H$ the pullback of the hyperplane.
To see that the given covering is polar with respect to every ample divisor, note that 
\begin{equation*}
  \label{eq:2}
  X \setminus U_0 = E \cup \widetilde H_0; \quad X \setminus U_i^\infty = q^*H_i \cup \widetilde H_0; \quad X \setminus U_i^0 = q^*H_i \cup \widetilde{\VV(f)}.
\end{equation*}

Now, $X$ has Picard group $\ZZ^2$ with generators $[H]$ and $[E]$. 
The effective cone is generated by $[E]$ and $[H]-[E]$ and the nef-cone by $[H]$ and $d[H]-[E]$.
Moreover, $[\widetilde{\VV(f)}] = d[H]-[E]$ and $[\widetilde H_0] = [H]-[E]$ hold.
So, for every of our affine charts we see that the components of the complement (\ref{eq:2}) span 
 a cone in the Neron-Severi space containing the whole nef cone of $X$. 
 Hence, every ample class can be expressed as a positive linear combination of the complement components.

Similarly, for $X'$ we have the flexible charts 
\[U^i_0 := X \setminus (\widetilde H_0 \cup E \cup \widetilde H_i),\quad 
 U_i^0:=X \setminus (\widetilde H_i \cup \widetilde{\VV(f)} \cup E'),\quad 
U_i^\infty:=X \setminus (\widetilde H_i \cup \widetilde{H}_0 \cup E'),\]
 the first two being affine spaces and the last one being a suspension as before. 
 Here, we use the same notation as above and $E'$  denotes the exceptional divisor of the blowup in the point. 
 We see that the complements of the affine charts always consist of three components with classes corresponding to one of the triples 
\[([H]-[E], [E], [H]-[E']),\quad ([H]-[E'], d[H]-[E], [E']),\quad([H]-[E'], [H]-[E], [E']).\] 
Further, each triple spans a cone containing the nef cone of $X'$, which is spanned by $d[H]-[E]$, $[H]-[E']$, and $[H]$. 
 Hence, every ample class can be expressed as a positive linear combination of complement components.
\end{example}

\section{Secant of Segre--Veronese variety}
\label{sec:secant}
This section is based on \cite{MOZ}, where the toric covering of a Segre variety was constructed.
Here we generalize that construction to a Segre--Veronese variety and hence prove Theorem~\ref{thm:secant-verones}.
Throughout this section by a \emph{parameterization} of a variety $Z$ 
we mean a dominant morphism from an open subset of an affine space to $Z$. 
\begin{definition}
Given for each $i=1,\ldots,n$ a finite-dimensional vector space $V_i$ and its symmetric power 
$S^{s_i}(V_i)$, $s_i\in\ZZ_{>0}$, the \emph{Segre--Veronese variety} 
\[X=v_{s_1}(\P(V_1))\times\dots\times v_{s_n}(\P(V_n))\subset\P(S^{s_1}(V_1)\otimes\dots\otimes S^{s_n}(V_n))\]
 is defined as the embedding of the product 
$\P(V_1)\times\dots\times\P(V_n)$ by the very ample line bundle $\O(s_1)\boxtimes\dots\boxtimes\O(s_n)$.
\end{definition}



We will be using an equivalent construction.
Apart from (projective) Segre--Veronese varieties we will consider affine cones over them and refer to those as \emph{Segre--Veronese cones}. 
They should not be confused with intersections of Segre--Veronese varieties with principal affine open subsets, which also play a crucial role. 

 For each $V_i$, $1\leq i\leq n$, we denote $d_i=\dim V_i-1$ and fix a basis $e^i_0,\dots,e^i_{d_i}$ of $V_i$. 
We also denote elements of the basis of $V_i^{\otimes s}$ by \[e^i_{i_1,\ldots,i_s}=e_{i_1}^i\otimes\dots\otimes e_{i_s}^i.\]
Thus, the symmetric power is $S^{s_i}(V_i)=$
\[\{v\in V_i^{\otimes s}\mid (e^i_{i_1,\ldots,i_s})^*(v)=(e^i_{i_{\sigma(1)},\ldots,i_{\sigma(s)}})^*(v)\text{ for any permutation }\sigma\in S_{s}\}.\]
This allows us to embed the Veronese cone into the Segre cone and obtain the following diagram:
\begin{center}
\includegraphics{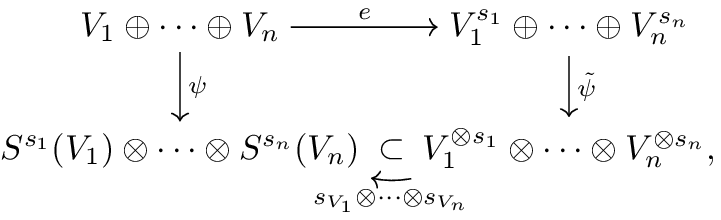}
\end{center}

where
\begin{itemize}
\item $e\colon(v_1,\dots,v_n)\mapsto (v_1,\dots,v_1,v_2,\dots,v_2,v_3,\dots,v_n)$ 
is the diagonal embedding,
\item  $\tilde \psi\colon (v_1^1,\dots,v_1^{s_1},v_2^1,\dots,v_2^{s_2},v_3^1,\dots,v_n^{s_n})\mapsto 
v_1^1\otimes\dots\otimes v_n^{s_n}$ 
is the parameterization the Segre cone, which is a nonlinear map,
\item  $\psi=\tilde\psi|_{\im(e)}\circ e$ 
is the parameterization of the Segre--Veronese cone, and
\item $s_{V_i}\colon V_i^{\otimes s_i}\rightarrow S^{s_i}(V_i)$ are the natural symmetrizing projections.
\end{itemize}

\begin{definition}[ $\hat{}$ , $A$, $B$]
\label{def:hat}\
\begin{enumerate}
\item For a vector space $V=V_{i_1}\otimes\cdots\otimes V_{i_k}$
we denote
\[\hat{V}:=\{x\in V\mid (e_0^{i_1}\otimes\cdots\otimes e_0^{i_k})^*(x)=1\}\]
and regard it as a vector space with basis 
$\{e_{j_1}^{i_1}\otimes\cdots\otimes e_{j_k}^{i_k}\mid j_1+\cdots+j_n>0\}.$ 
\item We may, and often will, consider $\hat{V}$ as a complement to a hyperplane section of $\P(V)$ by the natural open embedding $\hat{V}\subset \P(V)$.
\item We denote $A:=V_1^{\otimes s_1}\otimes\dots\otimes V_n^{\otimes s_n}$.
\item We also define
\[B:= \prod_{i=1}^n\hat{V_i}^{\times s_i}\subset V_1^{s_1}\oplus\dots\oplus V_n^{s_n},\]
\[B':= \prod_{i=1}^n\hat{V_i}\subset V_1\oplus\dots\oplus V_n.\]
\item
Finally, we denote $\pi\colon A\setminus\{0\}\to\P(A)$ and obtain the following diagram of open subsets:
\begin{center}
\includegraphics{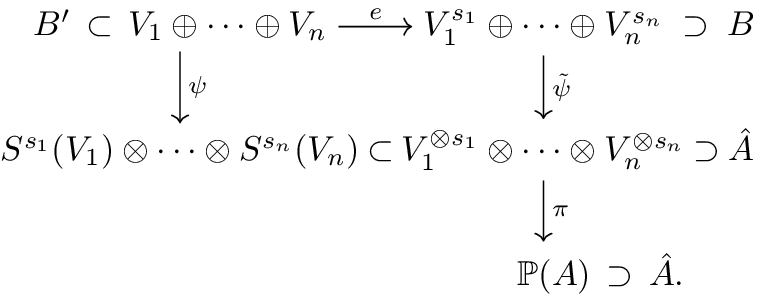}
\end{center}
\end{enumerate}
\end{definition}

\begin{remark}
Since $\P(S^{s_1}(V_1)\otimes\dots\otimes S^{s_n}(V_n))\subset\P(A)$, 
we can study the Segre--Veronese variety as a subvariety of $X=\overline{\pi\circ\psi(B')}\subset\P(A)$. 
Note that the image of $\tilde\psi|_B$ does not contain the origin.
\end{remark}

\subsection*{Cumulants}
In this setting we may apply the (nonlinear) coordinate systems of $B$, called cumulants and  presented in \cite{MOZ}. 
For the motivations to consider them, coming from algebraic statistics, we refer the reader to \cite{ZwiernikSturmfels, zwiernik, zwiernikbook}. 
A general mathematical setting for these methods is well presented in \cite{ciliberto2014cremona}. 
Further results are obtained for other varieties, e.g.~Grassmannians and spinor varieties \cite{manivel2014secants}. 
However, in other cases we do not obtain toric coverings. Still, we believe that similar methods can be applied to a larger class of secant and tangential varieties.

\begin{definition}[indices]\label{def:indices}
Basis elements of $A$ are of form $e^1_{c^1_1,\ldots,c^1_{s_1}}\otimes\cdots\otimes e^n_{c^n_1,\ldots,c^n_{s_n}}$ and are in natural correspondence with tuples
$(c^1_1,\ldots,c^1_{s_1},\ldots,c^n_1,\ldots,c^n_{s_n})$, 
where $ 0\le c^i_j\le d_i$ for $1\le i\le n$ and $1\le j\le s_i$. 
Let us denote the set of these tuples by $C(A)$ and for each  $c\in C(A)$  the corresponding basis element  by $e(c)$. 
Finally, denote the dual basis elements by $x(c)=e(c)^*$. Similarly, we denote $C(\hat A)=C(A)\setminus\{(0,\ldots,0)\}$.
\end{definition}

\begin{definition}[degree, ordering]\label{def:degree}
 Given a tuple $c\in C(\hat A)$, the number of its nonzero entries is called the \emph{degree} of $c$.
Given $c_1,c_2\in C(\hat A)$, we say that $c_1\le c_2$ if $c_1$ can be obtained from $c_2$ by setting some entries to zero.
\end{definition}
Thus, we have a natural poset structure on $C(\hat A)$, which induces a poset structure on the basis of
$\hat{A}$. 
Simply speaking, the ordering on the basis is defined by replacing  $e^i_j$ by $e^i_0$ in the tensor product elements.
\begin{example}
Consider $t_2:=e^1_1\otimes e^1_0\otimes e^2_5$, $t_1:=e^1_0\otimes e^1_0\otimes e^2_5$ and $t_0:=e^1_1\otimes e^1_3\otimes e^2_4$. 
We have $t_1<t_2$ and $t_1,t_2$ are not comparable with $t_0$.
\end{example}
\begin{definition}
Denote the set of indices
\[\textrm{Ind}(\hat A)=\textstyle\left\{{1\brack 1},\ldots,{1\brack s_1},\ldots,{n\brack 1},\ldots,{n\brack s_n}\right\}\]
and endow it with a natural lexicographic order, namely, the order of appearance above.
Given $c\in C(\hat A)$ and a subset of indices 
$I\subset\Ind(\hat A)$,
we introduce the index tuple 
\[c_I=(b^1_1,\ldots,b^n_{s_n}),\quad\mbox{ where } b^i_j=
\left[\begin{array}{lr}
c^i_j, & {i\brack j}\in I,\\
0,& {i\brack j}\notin I.
\end{array}\right.\]
Note that $\{b\in C(\hat A)\mid b\le c\}=\{c_I\mid I\subset \Ind(\hat A)\}$.
We will use either one-element subsets $I={i\brack j}$ or subsets of the following form, where  $i_1,i_2\in \Ind(\hat A)$:
\[
I=[i_1:i_2]:= \{i\mid i_1\le i< i_2\}\subset\Ind(\hat A).
\]
\end{definition}

\begin{definition}
A \emph{thick interval partition} of a tuple $c\in C(\hat A)$ of degree at least two is an increasing sequence of indices ${1\brack 1}=b_0<\ldots<b_k={n\brack s_n}$ 
such that $\deg c_{[b_i:b_{i+1}]}\ge2$ for each $i$. 
The set of all thick interval partitions of $c$ will be denoted by $IP(c)$. 
It is always nonempty 
as it contains $\left\{{1\brack 1},{n\brack s_n}\right\}$.
\end{definition}

Now we can recall the coordinate systems.

\begin{definition}[{\cite[Sec. 2]{MOZ}}]\label{def:aut1}
For each $c\in C(\hat A)$ we denote
\[y(c):=
\left[\begin{array}{lr}
x(c), & \deg c=1,\\
\sum_{(b)\leq (c)}(-1)^{\deg(c)-\deg(b)}x(b)\prod_{c^{i_0}_{j_0}\neq b^{i_0}_{j_0}}x(c_{i_0\brack j_0}),&\deg c>1.
\end{array}\right.
\]
Then for each $c\in C(\hat A)$ we introduce a function in $\K[\hat A]$
\[z(c):=
\left[\begin{array}{lr}
y(c), & \deg c=1,\\
\sum_{(b_0,\dots,b_k)\in IP(c)}(-1)^{k}\prod_{m=1}^k y(c_{[b_{m-1}:b_m]}),&\deg c>1.
\end{array}\right.
\]
\end{definition}

\begin{lemma}
Each of sets $\{x(c)\}_{ c\in C(\hat A)},$ $\{y(c)\}_{ c\in C(\hat A)},$ and $\{z(c)\}_{ c\in C(\hat A)}$
is an algebraically independent system of functions generating $\O(\hat A)$. In other words, $\{z(c)\}$ is a coordinate system on $\hat A$ as an affine space.
\end{lemma}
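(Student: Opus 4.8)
The plan is to show that the three systems of functions are related by invertible triangular changes of coordinates, which immediately propagates algebraic independence and the generation property from one system to the next. Since $\{x(c)\}_{c\in C(\hat A)}$ is by definition the dual basis of $\hat A$, it is the standard linear coordinate system on the affine space $\hat A$, so the claim for $\{x(c)\}$ is trivial. It then suffices to prove that the passage $x\mapsto y$ and the passage $y\mapsto z$ are each automorphisms of the polynomial ring $\K[\hat A]$, equivalently that each is a polynomial change of coordinates with polynomial inverse.

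First I would exploit the poset structure on $C(\hat A)$ introduced in Definition~\ref{def:degree}. The key observation is that both defining formulas are \emph{triangular} with respect to the degree filtration: inspecting the formula for $y(c)$ with $\deg c>1$, the leading term (the summand with $b=c$) is exactly $x(c)$, and every other summand is a product of factors $x(b)$ and $x(c_{i_0\brack j_0})$ all of strictly smaller degree than $\deg c$. Hence
\[
y(c) = x(c) + (\text{polynomial in } \{x(b): b<c,\ \deg b<\deg c\}).
\]
The same analysis applies to $z(c)$: for $\deg c>1$, the thick interval partition $\{{1\brack 1},{n\brack s_n}\}$ contributes the single leading term $y(c)$ (with $k=1$, sign $(-1)^1$; I would double-check the sign normalization here), while all remaining partitions have $k\ge 2$ and thus yield products of $y$'s of strictly smaller degree. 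So $z(c)=\pm y(c)+(\text{lower-degree terms in } y)$.

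From this triangularity I would argue by induction on degree that each map is invertible as a ring endomorphism. Concretely, one recovers $x(c)$ from the $y(b)$ with $b\le c$ by solving upward through the degree filtration: in degree one $x(c)=y(c)$, and for $\deg c>1$ one isolates $x(c)=y(c)-(\text{polynomial in strictly smaller }x\text{'s})$ and substitutes the already-constructed expressions for those smaller $x$'s in terms of $y$'s. This realizes the inverse as a polynomial map, so $x\mapsto y$ is an automorphism of $\K[\hat A]$; the identical argument handles $y\mapsto z$. Composing, $\{z(c)\}$ is the image of the coordinate system $\{x(c)\}$ under an automorphism of $\K[\hat A]$, hence is itself an algebraically independent generating set, i.e.\ a coordinate system on $\hat A$.

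The main obstacle I anticipate is purely bookkeeping rather than conceptual: one must verify \emph{rigorously} that the non-leading summands in both formulas genuinely involve only strictly smaller indices (so that no term of degree $\ge\deg c$ other than the designated leading term appears), which requires unwinding the definitions of $c_I$, of the thick interval partitions $IP(c)$, and of the ordering $\le$ carefully. In the $y$-formula one checks that each product $\prod_{c^{i_0}_{j_0}\neq b^{i_0}_{j_0}} x(c_{i_0\brack j_0})$ consists of degree-one factors and that the accompanying $x(b)$ has $\deg b<\deg c$ whenever $b<c$; in the $z$-formula one checks that every partition with $k\ge2$ splits $c$ into pieces $c_{[b_{m-1}:b_m]}$ each of degree $\ge 2$ but strictly below $\deg c$. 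Once this strict-triangularity is nailed down, the invertibility and the whole statement follow formally.
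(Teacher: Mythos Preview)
Your proposal is correct and follows essentially the same approach as the paper's proof: both arguments observe that $y(c)$ equals $x(c)$ plus terms involving only $x(b)$ with $b$ of strictly smaller degree, and likewise $z(c)$ equals $\pm y(c)$ plus lower-degree terms in the $y$'s, so that each change of coordinates is triangular with respect to the degree filtration and hence an automorphism of $\K[\hat A]$. Your write-up is simply a more detailed unpacking of what the paper compresses into two sentences; the sign caveat you flag for the leading term of $z(c)$ is harmless, since a unit leading coefficient suffices for invertibility.
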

\begin{proof}
Since $y(c)$ is a sum of $x(c)$ and of terms of smaller degree, the endomorphism of $\K[\hat A]$ that maps $x(c)$ to $y(c)$ for each $c$ is invertible. Same holds for $\{y(c)\}$ and $\{z(c)\}$. So, the statement follows.
\end{proof} 

\subsection*{Secant}
The secant variety $\Sec X\subset \P(A)$ of the Segre--Veronese variety $X$ is parameterized by a map
\begin{equation}\label{eq:secx}
\sec_X\colon \A^1\times B'\times B'\to \hat A, \quad (t,v,w)\mapsto \pi(t\cdot\psi(v)+(1-t)\cdot\psi(w)).
\end{equation}

Hereinafter, given a tuple of degree one, we denote the index of its only non-zero entry  by ${i_\0 \brack j_\0}$.
Generalizing \cite[Lemma~3.1]{MOZ}, we obtain the following result.
\begin{lemma}\label{lem:paramSV}
Let $\{(e_j^i)^{\prime*}\mid 1\le i\le n,\, 0\le j \le d_i\}$ be the set of coordinate functions on the first copy of $B'$ in (\ref{eq:secx})
and $\{(e_j^i)^{\prime\prime*}\}$ the respective set on the second one.\footnote{That is, $(e_j^i)^{\prime*}(v)=v_j^i$ and $(e_j^i)^{\prime\prime*}(w)=w_j^i$ respectively.}
Then 
\[\def\arraystretch{1.5}
\sec_X^*\colon z(c)\mapsto
\left[\begin{array}{l}
t(e^{i_\0}_{c^{i_\0}_{j_\0}})^{\prime*}+(1-t)(e^{i_\0}_{c^{i_\0}_{j_\0}})^{\prime\prime*}, 
\hfill \deg c=1\mbox{ with } c^{i_\0}_{j_\0}\neq0,\\
 t(1-t)(1-2t)^{\deg(z(c))-2}\prod_{c^i_j\neq 0}((e^{i}_{c^{i}_{j}})^{\prime*}-(e^{i}_{c^{i}_{j}})^{\prime\prime*}),
\hspace{2em}\hfill \deg c>1,
\end{array}\right.
\]
for each $c\in C(\hat A).$
\end{lemma}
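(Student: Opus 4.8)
The plan is to prove the cumulant formula by a direct computation, exploiting the multiplicativity of the Segre--Veronese parameterization together with an inductive analysis of the combinatorial transforms $y(c)$ and $z(c)$. First I would unwind the parameterization explicitly: on $\hat A$ the coordinate $x(c)$ pulled back along $\sec_X$ becomes $t\cdot\prod_{i,j}(e^{i}_{c^i_j})^{\prime*}+(1-t)\cdot\prod_{i,j}(e^{i}_{c^i_j})^{\prime\prime*}$, since $\psi$ sends a point to the product of its tensor factors and the secant map takes an affine combination of two such products. Here I would fix notation by writing $a^i_j:=(e^i_{c^i_j})^{\prime*}$ and $b^i_j:=(e^i_{c^i_j})^{\prime\prime*}$ for the relevant factors determined by the tuple $c$, so that $\sec_X^*x(c)=t\prod a^i_j+(1-t)\prod b^i_j$, with the convention that entries of degree zero (i.e.\ $c^i_j=0$) contribute the factor $1$ because they correspond to the basepoint $e^i_0$ in the affine chart $\hat V_i$. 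The degree-one case is then immediate from $z(c)=y(c)=x(c)$.

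\medskip

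Next I would compute $\sec_X^* y(c)$ for $\deg c>1$. Substituting the product formula into the defining sum for $y(c)$, each $x(b)$ with $b\le c$ contributes $t\prod_{b^i_j\neq 0}a^i_j+(1-t)\prod_{b^i_j\neq0}b^i_j$, while the trailing factors $x(c_{i_0\brack j_0})$ over the entries where $b$ and $c$ differ are each degree-one coordinates and hence pull back to $t\,a^{i_0}_{j_0}+(1-t)\,b^{i_0}_{j_0}$. The key step is to expand the resulting double product and collapse the alternating sum over $b\le c$. I expect the signs $(-1)^{\deg c-\deg b}$ to force a telescoping/inclusion--exclusion cancellation in which all mixed terms (those involving both $a$'s and $b$'s with the ``wrong'' grouping) vanish, leaving a clean expression. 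The natural guess, which I would verify, is that $\sec_X^* y(c)=t(1-t)\bigl((-1)^{\deg c}(1-t)^{\deg c-1}+(-t)^{\deg c-1}\bigr)\prod_{c^i_j\neq0}(a^i_j-b^i_j)$ up to sign bookkeeping; the essential content is that $y$ already produces the factorized product of differences $\prod(a^i_j-b^i_j)$ times a universal polynomial in $t$ depending only on the degree. Establishing this factorization is the computational heart of the proof, and I would isolate it as the main claim, proving it by induction on $\deg c$ so that the combinatorial identity governing the signs is handled one layer at a time.

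\medskip

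With the formula for $y$ in hand, the passage to $z(c)$ reduces to a purely combinatorial identity about thick interval partitions. Writing $P:=\prod_{c^i_j\neq0}(a^i_j-b^i_j)$, each factor $y(c_{[b_{m-1}:b_m]})$ contributes $P_{[b_{m-1}:b_m]}$ (the sub-product over that interval) times the $t$-polynomial attached to the degree of that block; multiplying over a partition of $IP(c)$ reassembles the full product $P$, since the intervals partition the nonzero entries of $c$. Thus $\sec_X^* z(c)=P\cdot\sum_{(b_0,\dots,b_k)\in IP(c)}(-1)^k\prod_{m}g\bigl(\deg c_{[b_{m-1}:b_m]}\bigr)$, where $g(d)$ is the scalar $t$-polynomial from the $y$-computation. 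The final step is to evaluate this generating-function sum and show it equals $t(1-t)(1-2t)^{\deg(z(c))-2}$. This is a one-variable identity: summing over compositions of $\deg c$ into thick (size $\ge2$) blocks with the attached weights, and I would compute it via a generating function in a formal variable tracking degree, reducing the thick-partition sum to a rational expression whose expansion gives the claimed power of $(1-2t)$. The main obstacle I anticipate is the $y$-level cancellation in the second paragraph: getting the signs and the $t$-dependence exactly right in the inclusion--exclusion over $b\le c$, after which both the reassembly of $P$ and the $(1-2t)$ identity are routine, albeit delicate, bookkeeping generalizing \cite[Lemma~3.1]{MOZ}.
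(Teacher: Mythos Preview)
Your approach is correct in outline but takes a substantially longer route than the paper. The paper's proof is a two-line reduction: it observes that the Segre--Veronese parameterization factors through the Segre parameterization via the diagonal embedding, namely $\sec_X=\sec_Y\circ(\id\times e\times e)$ where $Y$ is the cone over the pure Segre product $\PP(V_1)^{s_1}\times\cdots\times\PP(V_n)^{s_n}$, and then simply invokes \cite[Lemma~3.1]{MOZ} for $\sec_Y$. Composing with $e$ just identifies the $s_i$ copies of each $V_i$-coordinate, which immediately yields the stated formula. Your plan, by contrast, reproves that cited lemma from scratch: the inclusion--exclusion collapse for $y(c)$ and the thick-partition generating-function identity for $z(c)$ are exactly the content of \cite[Lemma~3.1]{MOZ}, and nothing in those arguments is specific to Segre--Veronese versus Segre (the combinatorics lives entirely on $\hat A$). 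What you gain is self-containment; what the paper gains is brevity and a clean structural explanation of why the Veronese factors introduce no new difficulty. If you do carry out the direct computation, be aware that your tentative formula for $\sec_X^* y(c)$ is not quite right as written---the correct $t$-polynomial at the $y$-stage is what makes the thick-partition sum collapse to $t(1-t)(1-2t)^{\deg c-2}$, and pinning it down carefully is exactly the ``sign bookkeeping'' you flag as the main obstacle.
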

\begin{proof}
Let $Y$ be the cone over the Segre product $\P(V_1)^{s_1}\times\dots\times\P(V_n)^{s_n}$.
Then the secant of $Y$ is parameterized by
\[\sec_Y:\A^1\times B\times B\to \hat{A},\; (t,v,w)\mapsto \pi( t\tilde\psi(v)+(1-t)\tilde\psi(w)).\]
Thus, $\sec_X=\sec_Y\circ(\id\times e\times e)$.
The statement follows after applying \cite[Lemma~3.1]{MOZ} to $\sec_Y$.
\end{proof}

\subsection*{Torus action}
We can infer the following decomposition of $\sec_X$.
\begin{definition}
Let us introduce a reparameterization $\rep\colon\A^1\times B'\times B'\to\A^1\times B'\times B',$
\[
\rep\colon(t,v,w)\mapsto\left(\frac{t(1-t)}{(1-2t)^2},tv+(1-t)w,(1-2t)(w-v)\right)
\]
and a \emph{monomial} map $m\colon\A^1\times B'\times B' \to\hat A,$
\[\def\arraystretch{1.5}
m^*\colon z(c)\mapsto
\left[\begin{array}{l}
(e^{i_\0}_{c^{i_\0}_{j_\0}})^{\prime*}, 
\hspace{2em}\hfill \deg c=1\mbox{ with } c^{i_\0}_{j_\0}\neq0,\\
t\prod_{c^i_j\neq 0} (e^i_{c^i_j})^{\prime\prime*},
\hspace{2em}\hfill \deg c>1.
\end{array}\right.
\]
\end{definition}
\begin{lemma}
There is a decomposition $\sec_X = m\circ\rep.$ In particular, $m$ is a monomial parameterization of $\Sec X$. 
\end{lemma}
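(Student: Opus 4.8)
The plan is to verify the claimed factorization $\sec_X = m \circ \rep$ by direct computation of pullbacks on the coordinate functions $z(c)$, treating the cases $\deg c = 1$ and $\deg c > 1$ separately, and then to read off the second assertion from the structure of $m$. Since $\{z(c)\}_{c \in C(\hat A)}$ is a coordinate system on $\hat A$ by the preceding lemma, it suffices to check that $(m \circ \rep)^* z(c) = \sec_X^* z(c)$ for every $c \in C(\hat A)$, and the right-hand side is given explicitly in Lemma~\ref{lem:paramSV}.

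First I would compute $(m \circ \rep)^* z(c) = \rep^*(m^* z(c))$ in the degree-one case. Here $m^* z(c) = (e^{i_\0}_{c^{i_\0}_{j_\0}})^{\prime *}$, the coordinate picking out the $(v)$-entry, so applying $\rep^*$ replaces $v$ by $t v + (1-t) w$; this yields exactly $t (e^{i_\0}_{c^{i_\0}_{j_\0}})^{\prime *} + (1-t)(e^{i_\0}_{c^{i_\0}_{j_\0}})^{\prime\prime *}$, matching the first line of Lemma~\ref{lem:paramSV}. The degree-one case is therefore essentially immediate once one unwinds how $\rep$ acts on the first $B'$-factor.

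The substantive step is the case $\deg c > 1$. Here $m^* z(c) = t \prod_{c^i_j \neq 0} (e^i_{c^i_j})^{\prime\prime *}$, a monomial in the second-copy coordinates times the scalar $t$. Under $\rep^*$, the scalar $t$ pulls back to $\frac{t(1-t)}{(1-2t)^2}$, while each factor $(e^i_{c^i_j})^{\prime\prime *}$ (the $w$-coordinate) pulls back to the corresponding entry of $(1-2t)(w - v)$, i.e. to $(1-2t)\bigl((e^i_{c^i_j})^{\prime\prime *} - (e^i_{c^i_j})^{\prime *}\bigr)$. Collecting the powers of $(1-2t)$: there are $\deg c$ such factors in the product, contributing $(1-2t)^{\deg c}$, against the $(1-2t)^{-2}$ from the scalar. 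The net power is $(1-2t)^{\deg c - 2} = (1-2t)^{\deg(z(c)) - 2}$, since $z(c)$ has degree $\deg c$; combined with the $t(1-t)$ from the scalar this reproduces $t(1-t)(1-2t)^{\deg(z(c))-2} \prod_{c^i_j \neq 0}\bigl((e^i_{c^i_j})^{\prime *} - (e^i_{c^i_j})^{\prime\prime *}\bigr)$, up to a sign coming from factoring $(-1)$ out of each of the $\deg c$ differences $w - v$. I expect the sign bookkeeping to be the one genuinely delicate point: one must check that the $(-1)^{\deg c}$ from the differences combines correctly with the overall sign so that the result agrees with Lemma~\ref{lem:paramSV}, and that the identification $\deg(z(c)) = \deg c$ is used consistently.

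Finally, for the second assertion, I would observe that $\rep$ is a dominant (indeed birational onto its image) self-map of $\A^1 \times B' \times B'$, so that the image of $m$ and the image of $m \circ \rep = \sec_X$ have the same closure, namely $\Sec X$. Since every component of $m^*$ is a monomial in the coordinate functions of $\A^1 \times B' \times B'$, the map $m$ is by construction a monomial parameterization of $\Sec X$, which is exactly what is asserted. The main obstacle is thus not conceptual but the careful tracking of the $(1-2t)$-exponents and the sign in the $\deg c > 1$ computation.
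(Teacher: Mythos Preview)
Your approach is exactly what the paper intends: its proof reads simply ``Straightforward'', i.e.\ the direct verification on the coordinate system $\{z(c)\}$ that you carry out. Your caution about the sign is well placed---with $\rep$ as printed one picks up an extra $(-1)^{\deg c}$, so the third component of $\rep$ should presumably be $(1-2t)(v-w)$ rather than $(1-2t)(w-v)$; this is a cosmetic typo and does not affect the argument or the conclusion that $m$ is a monomial parameterization.
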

\begin{proof}
Straightforward.
\end{proof}
This monomial parameterization of $\Sec X$ already provides a structure of a toric variety on $\im(m)=\hat A\cap\Sec X,$ hence provides us with a toric chart of $\Sec X$. Below we describe in detail its structure.

\begin{definition}
Let us introduce the following closed subsets of $\hat A$:
\begin{align*}
\hat A_1&= \{z(c)=0\mid \deg c>1\},\\
\hat A_2 &= \{z(c)=0\mid \deg c=1\},\\
S_2&=\Sec X\cap A_2.
\end{align*}
\end{definition}

\begin{definition}[$P$]
Consider the lattice $M=\bigoplus_{\substack{1\leq i\leq n\\ 1\leq j\leq d_i}}\Z \chi^i_j$, 
where $\chi^i_j=(e^i_j)^*$. We introduce the lattice polytope $P\subset M\otimes\QQ$ defined by inequalities
\[
\begin{cases}
(\chi^i_j)^*\geq 0,  &1\leq i\leq n, 1\leq j\leq d_i,\\
\sum_{j=1}^{d_i}(\chi^i_j)^*\leq s_i, & 1\leq i\leq n,\\
\sum_{\substack{1\leq i\leq n\\ 1\leq j\leq d_i}}(\chi^i_j)^*\geq 2.&
\end{cases}
\]
\end{definition}

\begin{proposition}\label{pr:sec}
In the terminology above, 
\begin{enumerate}
\item $\Sec X\cap \hat A= \hat A_1\times S_2$ via natural projections along coordinates $z(c)$,
\item $\hat A_1=X\cap \hat A\cong \A^N$, where $N={\sum_{i=1}^n\dim \hat V_i},$
\item $S_2\cong\AffCone(X_P)$, where $X_P$ is a projective toric variety with polarization corresponding to the polytope $P$, e.g. see \cite[\S2.3]{cox2011toric} for the construction.
\end{enumerate}
\end{proposition}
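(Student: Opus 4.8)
The plan is to unpack the monomial parameterization $m$ established in the previous lemma and read off the geometry of $\Sec X\cap\hat A$ directly from its coordinate description in terms of the $z(c)$. The key observation is that $m^*$ separates the coordinate functions $z(c)$ into two blocks according to $\deg c$: for $\deg c=1$ the image $z(c)$ depends only on the first copy of $B'$ (via $(e^{i_\0}_{c^{i_\0}_{j_\0}})^{\prime*}$), while for $\deg c>1$ it depends on the second copy together with the scalar $t$. First I would use this block structure to verify claim (i): the $z(c)$ with $\deg c>1$ pulled back from $\hat A_1$ and those with $\deg c=1$ pulled back from $S_2$ generate the coordinate ring of $\Sec X\cap\hat A$ independently, so that the natural projections along the $z(c)$-coordinates give an isomorphism $\Sec X\cap\hat A\cong\hat A_1\times S_2$. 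Concretely, setting the degree-one coordinates to zero cuts out $\hat A_1=X\cap\hat A$ and setting the higher-degree coordinates to zero cuts out the slice $S_2$.

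For claim (ii), I would note that $\hat A_1=X\cap\hat A$ is, by the restriction of the decomposition, parameterized by the degree-one functions alone, which according to $m^*$ are simply the linear coordinates $(e^{i_\0}_{c^{i_\0}_{j_\0}})^{\prime*}$ on $B'$. Since $B'=\prod_{i=1}^n\hat V_i$ is a product of affine spaces, this immediately identifies $X\cap\hat A$ with $\A^N$ where $N=\sum_{i=1}^n\dim\hat V_i$. Here the $z(c)$-coordinates of degree one furnish the ambient linear coordinates, so the affine-space structure is transparent once the reparameterization has linearized the chart; this is exactly the point of passing to cumulant coordinates.

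The substantive part is claim (iii), identifying $S_2\cong\AffCone(X_P)$ for the polytope $P$. Here I would read off from the $\deg c>1$ row of $m^*$ that the coordinate $z(c)$ is sent to the monomial $t\prod_{c^i_j\neq0}(e^i_{c^i_j})^{\prime\prime*}$. Writing the exponent of the monomial $\prod_{j}(\chi^i_j)^{a^i_j}$ attached to a tuple $c$, I would match the constraints defining which tuples $c$ occur in $C(\hat A)$ to the three families of inequalities defining $P$: the nonnegativity $(\chi^i_j)^*\geq0$ reflects that exponents are nonnegative, the bound $\sum_{j}(\chi^i_j)^*\leq s_i$ reflects that each factor $V_i$ contributes at most $s_i$ tensor slots, and the global lower bound $\sum_{i,j}(\chi^i_j)^*\geq2$ encodes precisely the condition $\deg c>1$. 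The extra factor of $t$ is what promotes the projective toric variety $X_P$ to its affine cone. Thus the lattice points of $P$ index exactly the degree-$>1$ monomials, and the standard construction of a polarized projective toric variety from a lattice polytope (cited as \cite[\S2.3]{cox2011toric}) yields $S_2\cong\AffCone(X_P)$.

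The main obstacle I anticipate is bookkeeping: making the correspondence between tuples $c\in C(\hat A)$ of degree $\geq2$ and lattice points of $P$ genuinely bijective, rather than merely surjective onto monomials. In particular, I must check that distinct $c$ can produce the same exponent vector $(a^i_j)$ only in a controlled way, and confirm that the monomial map $m$ restricted to $S_2$ is exactly the toric parameterization attached to $P$ (including that its image is the affine cone and not a proper subvariety). Verifying that the ideal of relations among the monomials $t\prod(e^i_{c^i_j})^{\prime\prime*}$ coincides with the toric ideal of $P$ is the step where I would need to be careful; the rest reduces to matching inequalities with combinatorial constraints on the indices.
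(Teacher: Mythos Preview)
Your approach matches the paper's almost verbatim: the paper also observes that $m$ factors as a direct product of $m|_{\{0\}\times B'\times\{0\}}$ (parameterizing $\hat A_1=X\cap\hat A$) and $m|_{\A^1\times\{0\}\times B'}$ (parameterizing $S_2$), and then identifies the monomials for $\deg c>1$ with the lattice points $P\cap M$. Your flagged bookkeeping concern is not a genuine obstacle: distinct tuples $c$ yielding the same exponent vector simply force the corresponding $z(c)$-coordinates to coincide on $S_2$, so the image lies in a linear subspace of $\hat A_2$ isomorphic to the standard affine cone over $X_P$; the paper likewise records only that the \emph{set} of monomials equals $P\cap M$ and does not belabor this point.
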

\begin{proof}
The morphism $m$ is a direct product of $m|_{\{0\}\times B'\times\{0\}}$ and $m|_{\A^1\times \{0\}\times B'}$, which respectively parameterize $\hat A_1$ and $S_2$. This implies (i).
The Segre--Veronese variety $X$ is also parameterized by $m|_{\{0\}\times B'\times \{0\}}$, thus (ii) holds.

Let us consider the standard torus $T=\Spec\K[M]\subset B'$ and a projectivization $\pi_z\colon \hat A\setminus\{0\}\to\P(\hat A)$ along $z(c)$-coordinates.
Then $X_P=\overline{\pi_z(S_2)}$ is parameterized by a $T$-equivariant map $\pi_z\circ m|_{\{0\}\times\{0\}\times B'}$, which is defined by the set of monomials $\{(e^i_{c^i_j})^{*}\mid\deg c>1\}\subset\O(B')$ corresponding exactly to lattice points $P\cap M$. This implies (iii).
\end{proof}


\begin{proposition}\label{pr:tan}
Let $\Tan X\subset \P(A)$ be the tangential variety of $X$. 
Then 
\[\Tan X\cap\hat A=\hat A_1\times X_P^\prime,\]
where $X_P^\prime\subset \hat A_2$ is a nondegenerate toric variety parameterized by  $m|_{\{-\frac{1}{4}\}\times \{0\}\times B'}$.
\end{proposition}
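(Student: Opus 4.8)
The plan is to prove Proposition~\ref{pr:tan} in close parallel with Proposition~\ref{pr:sec}, realizing the tangential variety as the degeneration of the secant variety in which the two moving points of $X$ collide. Classically a tangent line is a limit of secant lines, so $\Tan X$ is the Zariski closure of the image of $\sec_X$ along the boundary $t\to\infty$. Concretely, I would set $w=v+\varepsilon u$ and let $\varepsilon\to0$ while $t\to\infty$ with $(1-t)\varepsilon=\lambda$ held fixed; expanding $\psi(v+\varepsilon u)=\psi(v)+\varepsilon\,D\psi(v)[u]+O(\varepsilon^2)$ shows that $\sec_X(t,v,v+\varepsilon u)$ tends to the point $\pi(\psi(v)+\lambda\,D\psi(v)[u])$ on the tangent line at $\psi(v)$. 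Thus the tangent-limit image is dense in $\Tan X\cap\hat A$.

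Next I would feed this limit through the decomposition $\sec_X=m\circ\rep$. A direct computation gives $\tfrac{t(1-t)}{(1-2t)^2}\to-\tfrac14$ as $t\to\infty$, so the first coordinate of $\rep$ freezes at $-\tfrac14$. For the remaining two factors, in the scaling above one finds $tv+(1-t)w\to v+\lambda u$ and $(1-2t)(w-v)\to 2\lambda u$; since $v$ ranges over $B'$ independently of $\lambda u$, the pair $(v+\lambda u,\,2\lambda u)$ sweeps out a dense subset of $B'\times B'$. Hence $\rep$ extends over the boundary to a dominant map onto $\{-\tfrac14\}\times B'\times B'$, and therefore
\[\Tan X\cap\hat A=\overline{\im\bigl(m|_{\{-\frac14\}\times B'\times B'}\bigr)}.\]

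The last step mirrors the proof of Proposition~\ref{pr:sec}: the monomial map $m$ is the direct product of $m|_{\{0\}\times B'\times\{0\}}$, which carries the coordinates $z(c)$ of degree one and parameterizes $\hat A_1\cong\A^N$, and of $m|_{\{-\frac14\}\times\{0\}\times B'}$, which carries the coordinates $z(c)$ with $\deg c>1$ and parameterizes $X_P':=\im(m|_{\{-\frac14\}\times\{0\}\times B'})\subset\hat A_2$. The product decomposition along the $z(c)$-coordinates then yields $\Tan X\cap\hat A=\hat A_1\times X_P'$. Finally, $X_P'$ is an orbit closure for the standard torus $T=\Spec\K[M]$ acting through the free copy of $B'$, hence toric; and since for distinct $c$ with $\deg c>1$ the images $-\tfrac14\prod_{c^i_j\neq0}(e^i_{c^i_j})^{\prime\prime*}$ are distinct monomials, they are linearly independent as functions, so $X_P'$ satisfies no linear relation and is nondegenerate in $\hat A_2$.

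I expect the main obstacle to be the first two steps: justifying rigorously that passing to the Zariski closure defining $\Tan X$ is the same as adjoining the single boundary value $-\tfrac14$ of the reparameterized cone-coordinate, and that the resulting limit recovers all of the tangential variety rather than a proper subvariety. This amounts to controlling the $t\to\infty$ limit in the formula of Lemma~\ref{lem:paramSV}, where the factor $(1-2t)^{\deg(z(c))-2}$ blows up; the key point is precisely that $\rep$ was engineered so that these blow-ups are absorbed into the finite coordinate $(1-2t)(w-v)$, making the limit well defined and reducing the statement to the already understood monomial geometry of $m$.
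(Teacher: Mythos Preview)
Your limiting argument is essentially the paper's own approach: both obtain $\Tan X\cap\hat A$ as the $t\to\infty$ (equivalently $\varepsilon\to 0$ with $t=\varepsilon^{-1}$) degeneration of the secant parameterization, and both recover the product $\hat A_1\times X_P'$ from the splitting of $m$ into the degree-$1$ and degree-$>\!1$ blocks. Your use of the factorization $\sec_X=m\circ\rep$ to see the limit land on the slice $\{-\tfrac14\}\times B'\times B'$ is a clean variant of the paper's direct computation via Lemma~\ref{lem:paramSV}; the paper simply evaluates $z(c)(\sec_X(\varepsilon^{-1},v,v+\varepsilon w))$ and lets $\varepsilon\to 0$, arriving at $-\tfrac14\prod 2w^i_{c^i_j}$ for $\deg c>1$. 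Both arguments are sketches over $\CC$ (the paper says so explicitly and points to \cite{MOZ} and \cite[Lem.~3.3]{manivel2014secants} for a full proof), and your closing paragraph correctly identifies where the analytic limit would need algebraic justification.

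There is, however, a genuine error in your treatment of ``nondegenerate''. In this paper the term does \emph{not} mean that $X_P'$ spans $\hat A_2$; it means that $X_P'$ has no torus factor (equivalently, $\CO(X_P')$ has no non-constant invertible elements), which is what is used in Theorem~\ref{thm:secant-veronese-covering} to apply \cite[Thm.~0.2]{flex-instances}. Your argument for projective nondegeneracy is moreover based on a false claim: the monomials $\prod_{c^i_j\neq 0}(e^i_{c^i_j})^{\prime\prime*}$ are \emph{not} distinct for distinct $c$. For instance, with $n=1$, $s_1=2$, $d_1\geq 2$ the tuples $c=(1,2)$ and $c'=(2,1)$ both have degree $2$ and both yield $(e^1_1)^{\prime\prime*}(e^1_2)^{\prime\prime*}$, so $z(c)-z(c')$ vanishes on $X_P'$ and $X_P'$ does lie in a hyperplane of $\hat A_2$. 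The correct (one-line) argument is the paper's: the parameterization $m|_{\{-\frac14\}\times\{0\}\times B'}$ gives an injection $\CO(X_P')\hookrightarrow\CO(B')$ into a polynomial ring, hence $\CO(X_P')$ contains no non-constant units and $X_P'$ has no torus factor.
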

\begin{proof}
We present here a sketch of a proof for $\K=\C$, see \cite{MOZ} and  \cite[Lem.~3.3]{manivel2014secants} for a complete proof.
Consider $(\epsilon^{-1},v, v+\epsilon w)\in \A^1\times B'\times B'$.
If  $\epsilon\to0$, then $\sec_X(\epsilon^{-1},v, v+\epsilon w)$ tends to an element of  $\mathrm{T}_{\pi\circ\psi(v)}X\embed\Tan X$ corresponding to $w$.
On the other hand, by Lemma~\ref{lem:paramSV}, 
\[\def\arraystretch{1.5}
\lim_{\epsilon\rightarrow 0}z(c)(\sec_X(\epsilon^{-1},v, v+\epsilon w))\to
\left[\begin{array}{l}
v^{i_\0}_{c^{i_\0}_{j_\0}}-w^{i_\0}_{c^{i_\0}_{j_\0}}, 
\hspace{2em}\hfill \deg c=1\mbox{ with } c^{i_\0}_{j_\0}\neq0,\\
-\frac{1}{4}\prod_{c^i_j\neq 0} 2w^i_{c^i_j},
\hspace{2em}\hfill \deg c>1.
\end{array}\right.
\]
Thus, the decomposition follows. It remains to check that $X^\prime_P$ is nondegenerate. Indeed, $\O(X^\prime_P)\embed\O(B')$ does not contain invertible elements.
\end{proof}

These propositions imply the following relationship of the tangential and secant varieties,
which, in its turn, implies Zak's theorem \cite{Zak} for Segre--Veronese varieties.
\begin{corollary}\label{c:sectan}
The following conditions are equivalent:
\begin{enumerate}
\item $P$ is not contained in the hyperplane $\sum_{i,j}(\chi^i_j)^*= 2$,
\item $\dim\Sec X=2\dim X+1$,
\item $\Sec X\neq \Tan X,$
\item $\dim\Tan X = \dim \Sec X -1$.
\end{enumerate}
Then $\Sec X$ is called non-degenerate.
\end{corollary}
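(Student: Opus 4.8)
The plan is to transport everything to the affine chart $\hat A$, where Propositions~\ref{pr:sec} and~\ref{pr:tan} supply product decompositions, and then to reduce all four conditions to a single combinatorial property of the polytope $P$: its full-dimensionality in $M\otimes\QQ$. Since $X$, $\Sec X$ and $\Tan X$ are irreducible and meet $\hat A$ (the parameterizations land there), $\Sec X\cap\hat A$ and $\Tan X\cap\hat A$ are dense open subsets, so dimensions may be read off the charts.

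First I would record the relevant dimensions. Writing $N=\dim X$, Proposition~\ref{pr:sec} gives $\hat A_1\cong\A^N$ and $\dim\Sec X=\dim\hat A_1+\dim S_2=N+\dim\AffCone(X_P)=N+\dim P+1$, while Proposition~\ref{pr:tan} gives $\dim\Tan X=N+\dim X_P'$. Thus condition (ii), $\dim\Sec X=2N+1$, reads exactly $\dim P=N$, i.e. $P$ is full-dimensional. I would then match this with (i): $P$ is the product of simplices $\prod_i s_i\Delta_{d_i}$ intersected with the halfspace $\{\sum_{i,j}(\chi^i_j)^*\ge2\}$, and, the product of simplices being full-dimensional, $P$ fails to be full-dimensional only when this halfspace meets it along its bounding hyperplane, that is $P\subseteq\{\sum_{i,j}(\chi^i_j)^*=2\}$. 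Hence (i)$\Leftrightarrow$(ii). (We may assume $P\neq\emptyset$; otherwise $X$ is a linear space and all four conditions fail trivially.)

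The key step is to compute $\dim X_P'$. Here I would use that $X_P'$ is parameterized by the monomials with exponent vectors $P\cap M$, so its dimension equals the rank of the subgroup of $M$ generated by $P\cap M$, i.e. the dimension of the \emph{linear} span of $P\cap M$. This differs from $\dim\AffCone(X_P)=\dim P+1$ only through the position of the origin: the linear span has dimension $\dim P$ when $0\in\operatorname{aff}(P)$ and $\dim P+1$ when $0\notin\operatorname{aff}(P)$. As every point of $P$ satisfies $\sum_{i,j}(\chi^i_j)^*\ge2>0$, we have $0\in\operatorname{aff}(P)$ exactly when $P$ is full-dimensional, whereas $P\subseteq\{\sum_{i,j}(\chi^i_j)^*=2\}$ forces $0\notin\operatorname{aff}(P)$. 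Consequently
\[\dim\Sec X-\dim\Tan X=(\dim P+1)-\dim X_P'=\begin{cases}1,&0\in\operatorname{aff}(P),\\0,&0\notin\operatorname{aff}(P),\end{cases}\]
and the first case is precisely (i). This yields (i)$\Leftrightarrow$(iv).

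Finally, (iii)$\Leftrightarrow$(iv) is formal: tangent lines are limits of secant lines, so $\Tan X\subseteq\Sec X$, and both are irreducible; since the dimension gap above is $0$ or $1$, we get $\Sec X\neq\Tan X$ if and only if $\dim\Tan X=\dim\Sec X-1$. Chaining (i)$\Leftrightarrow$(ii), (i)$\Leftrightarrow$(iv), (iii)$\Leftrightarrow$(iv) closes the cycle. I expect the main obstacle to be the dimension count for $X_P'$: unlike $S_2=\AffCone(X_P)$, the toric variety $X_P'$ need not be a cone, and its dimension jumps by one exactly when $P$ lies in the hyperplane $\{\sum_{i,j}(\chi^i_j)^*=2\}$. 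Correctly tracking this jump—equivalently, recognizing that $X_P'=S_2$ precisely in the degenerate case—is the heart of the argument.
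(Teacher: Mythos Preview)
Your proposal is correct and follows essentially the same route as the paper: both reduce everything to the affine chart via Propositions~\ref{pr:sec} and~\ref{pr:tan}, identify condition~(i) with full-dimensionality of $P$, and compare $\dim S_2=\dim P+1$ with $\dim X_P'$. Your treatment is slightly more systematic---you compute the gap $\dim\Sec X-\dim\Tan X\in\{0,1\}$ once via the linear versus affine span of $P\cap M$---whereas the paper handles the implications separately, noting for $\neg(i)\Rightarrow\neg(iii)$ that equal-degree monomials force $X_P'\cong\AffCone(X_P)$; but this is exactly your observation that $X_P'=S_2$ in the degenerate case, so the arguments coincide.
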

\begin{proof}

Let $d=\dim X$. As a toric variety, $X$ is represented by a $d$-dimensional polytope $S$, which is a product of simplices. Then $P$ is the intersection of $S$ with the halfspace $\sum_{i,j}(\chi^i_j)^*\geq 2$. 

$i)\Rightarrow ii)$: The assumption implies $\dim P=d$. Hence, $\dim S_2=d+1$ and $\dim\Sec X=\dim \hat A_1+\dim S_2=2d+1$.

$i)\Rightarrow iv)$: As before, $\dim \mathrm{Cone}(P)=d$. Hence, $\dim \Tan X=\dim\hat A_1+d=2d$.

The implications $ii)\Rightarrow iii)$ and $iv)\Rightarrow iii)$ are obvious.

$iii)\Rightarrow i)$: If $P$ is contained in the hyperplane, then all monomials corresponding to lattice points in $P$ are of same degree. In particular, $X_P'\cong \AffCone(X_P)$.
\end{proof}

\begin{example}
{\bf 1)} Consider the Veronese surface $Y=\PP^2\embed\PP^5$. It is a projective toric variety corresponding to the simplex $S=\conv(0,2\chi_1,2\chi_2)\subset\langle \chi_1,\chi_2\rangle\cong\ZZ^2$, i.e. parameterized by characters of a two-dimensional torus that correspond to the lattice points of $S$. 
By Proposition \ref{pr:sec}, the variety $X_P$ that defines the secant is parameterized by $P=S\cap\{\chi_1^*+\chi_2^*\geq 2\}$, i.e., by lattice points $2\chi_1,\chi_1+\chi_2,2\chi_2$. 
So, both factors of $\Sec X\cap\hat A=\hat A_1\times S_2$ are of dimension two, so $\dim\Sec X=4$.
Thus, the secant variety is degenerate and defined by $z(1,2)^2=z(1,1)z(2,2)$.
  
{\bf 2)} Consider the Segre product $\PP^1\times\PP^1\times\PP^1$. 
The representing polytope is a cube $[0,1]^3$. 
The secant variety is represented by a polytope with vertices $(1,1,0)$, $(1,0,1)$, $(0,1,1)$, $(1,1,1)$. 
We see that the affine cone over it is the whole affine space; indeed in this case the secant variety is non-degenerate and fills the whole ambient space. 
The tangential variety is a hypersurface defined by the equation
$z(1,1,0)z(1,0,1)z(0,1,1)=z(1,1,1)^2$.
\end{example}

\begin{theorem}
\label{thm:secant-veronese-covering}
The tangential and secant varieties of a Segre--Veronese variety are covered by complements of hyperplane section. Each complement is an affine toric variety without torus factors. In case of the secant variety, these are always normal toric varieties. In case of the tangential variety they are normal if the underlying variety is the Segre product.
\end{theorem}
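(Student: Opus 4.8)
The plan is to reduce everything to the single distinguished chart $\hat A$, whose intersections with $\Sec X$ and $\Tan X$ are already described as affine toric varieties by Propositions~\ref{pr:sec} and~\ref{pr:tan}, and then to spread this chart over all of $\Sec X$ and $\Tan X$ using the action of the group $G=\mathrm{GL}(V_1)\times\dots\times\mathrm{GL}(V_n)$. This group acts on $A$, preserves the Segre--Veronese variety $X$, and hence preserves $\Sec X$ and $\Tan X$ together with their embeddings in $\PP(A)$. Writing the distinguished chart as the complement of the hyperplane $H_0=\{x(0,\dots,0)=0\}$ (a complement of a hyperplane in $\PP(A)$ is an affine space, so every such chart is affine), the covering family will consist of the translates $g\cdot H_0$, $g\in G$, and the corresponding charts $Z\cap(\PP(A)\setminus g\cdot H_0)$ for $Z\in\{\Sec X,\Tan X\}$. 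Since $g$ restricts to an isomorphism of $Z$ carrying $\hat A\cap Z$ onto this chart, each member of the family is abstractly isomorphic to the standard one.

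First I would verify the covering property. The functional $x(0,\dots,0)$ equals $(e_0^1)^{*\otimes s_1}\otimes\dots\otimes(e_0^n)^{*\otimes s_n}$, so its $G$-orbit consists of the decomposable functionals $\ell_1^{\otimes s_1}\otimes\dots\otimes\ell_n^{\otimes s_n}$ with $\ell_i\in V_i^*\setminus\{0\}$. In characteristic zero the pure powers $\ell^{\otimes s}$ span $S^s(V_i^*)$, hence these decomposable functionals span $\big(S^{s_1}(V_1)\otimes\dots\otimes S^{s_n}(V_n)\big)^*$ and have no common zero on the linear subspace $\PP(S^{s_1}(V_1)\otimes\dots\otimes S^{s_n}(V_n))$. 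As $X$, and therefore $\Sec X$ and $\Tan X$, lie in this subspace, every point of $\Sec X\cup\Tan X$ is nonzero on some $g\cdot x(0,\dots,0)$, i.e. it lies in the corresponding chart. Thus the family $\{g\cdot H_0\}_{g\in G}$ (and already a finite subfamily, by quasi-compactness) yields the desired covering by complements of hyperplane sections.

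Next I would record that each chart is an affine toric variety without torus factors. By Proposition~\ref{pr:sec} the standard secant chart is $\hat A_1\times S_2\cong\A^N\times\AffCone(X_P)$, a product of an affine space with the affine cone over a projective toric variety; by Proposition~\ref{pr:tan} the standard tangential chart is $\hat A_1\times X_P'\cong\A^N\times X_P'$ with $X_P'$ nondegenerate. Both are toric, being products of toric varieties equipped with the torus action induced by the monomial parameterization~$m$. In each case the coordinate ring has no nonconstant units: on the $\AffCone(X_P)$ factor this is the grading by the cone variable, and on the $X_P'$ factor it is exactly the nondegeneracy established in Proposition~\ref{pr:tan}. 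Absence of nonconstant units means absence of a torus factor, and this property, like the toric structure, is transported along the isomorphisms induced by $G$.

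Finally I would treat normality, which I expect to be the technical heart of the statement. For a monomial (toric) chart, normality is equivalent to saturation of the affine semigroup generated by the exponents of its defining monomials, so in each case the claim reduces to a saturation statement. For the secant chart this amounts to normality of $\AffCone(X_P)$, i.e. to showing that the polytope $P=\big(\prod_{i}s_i\Delta_{d_i}\big)\cap\{\sum_{i,j}(\chi^i_j)^*\ge 2\}$ is a normal lattice polytope. Here I would use that each dilated simplex $s_i\Delta_{d_i}$, and hence their product, is normal, and then run a balancing/exchange argument: given a lattice point of $kP$, decompose it inside $\prod_i s_i\Delta_{d_i}$ and redistribute coordinates among the $k$ summands so that each acquires degree at least $2$, which is possible since the total degree is at least $2k$ and the simplex caps $\sum_j(\chi^i_j)^*\le s_i$ leave room to shift a unit from a summand of degree $\ge 3$ to one of degree $\le 1$. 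For the tangential chart in the Segre case $s_1=\dots=s_n=1$, the monomials of $X_P'$ are squarefree (each variable $w^i$ occurs at most once and distinct variables come from distinct factors), and I would verify saturation of their semigroup by the analogous, but simpler, exchange argument, the squarefree structure making the redistribution automatic. The main obstacle is precisely these combinatorial saturation arguments; by contrast the covering and the toric-without-torus-factor assertions follow formally from $G$-equivariance and Propositions~\ref{pr:sec}--\ref{pr:tan}.
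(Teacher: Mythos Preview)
Your approach is correct and largely parallels the paper's, with one genuine divergence.

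For the covering and for ``toric without torus factors'' you and the paper argue the same way: transport the standard chart $\hat A$ by the $\mathrm{GL}(V_1)\times\dots\times\mathrm{GL}(V_n)$ action (the paper phrases this simply as ``various choices of basis vectors $e^i_0$''), and read off the toric structure and absence of torus factors from Propositions~\ref{pr:sec} and~\ref{pr:tan}. Your observation that the $G$-orbit of $x(0,\dots,0)$ spans $\big(S^{s_1}(V_1)\otimes\dots\otimes S^{s_n}(V_n)\big)^*$ is a clean way to see that the translates actually cover; the paper leaves this step implicit.

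The substantive difference is normality. The paper does \emph{not} argue at the chart level at all; it cites global results---\cite[Theorem~2.2]{vermeire} for normality of $\Sec X$ and \cite[Proposition~8.5]{MOZ} for normality of $\Tan X$ in the Segre case---and then uses that an open subset of a normal variety is normal. Your plan to show directly that $P$ is a normal lattice polytope (and, for the tangential Segre case, that the semigroup of $X_P'$ is saturated) via an exchange/redistribution argument is a legitimate alternative that would make the proof self-contained. The secant redistribution you sketch does go through: whenever one summand has degree $\le 1$ and another has degree $\ge 3$, the block caps $\sum_j(\chi^i_j)^*\le s_i$ always leave room to transfer a unit between them, and iterating drives all summands to degree $\ge 2$. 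So your route trades two citations for a nontrivial combinatorial lemma; the paper's route trades self-containment for brevity.
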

\begin{proof}
By \cite[Theorem 2.2]{vermeire} we know that $\Sec X$ is normal. By \cite[Proposition 8.5]{MOZ} we know that $\Tan X$ is normal, when $X$ is the Segre product.
 The open subsets
 $\Sec X\cap \hat A$ and $\Tan X\cap \hat A$  are toric varieties by Propositions~\ref{pr:sec} and \ref{pr:tan}.
Moreover, they do not contain torus factors, since they are products of an affine space $\hat A_1$ with either an affine cone over a projective toric variety $X_P$ or a non-degenerate toric variety $X_P^\prime$. 
  By taking such subsets for various choices of basis vectors $e^i_0$, $i=1,\ldots,n$, we obtain the statement.
\end{proof}

\secflex
\begin{proof}
  By \cite[Theorem 0.2]{flex-instances} we know that all the affine charts from Theorem~\ref{thm:secant-veronese-covering} are flexible. Now, by Theorem~\ref{thm:flexible-criterion} we obtain flexibility of the cone.
\end{proof}
\begin{example}
Consider the third Veronese embedding $X=v_3(\PP^1)$, it is represented by characters in the interval $S=\conv(0,3)$. Then $\Tan X\cap \hat A\cong \A^1\times X_P'$, where the monoid of characters associated to the toric variety $X_P'$ is generated by $\{2,3\}$. Namely, $X_P'$ is the curve with a cusp singularity at the origin. Thus, $\Tan X$ is  a two-dimensional surface, whose singular locus is the curve $X$. This example can be generalized to tangential varieties of other Segre--Veronese varieties provided that at least one of the Veronese factors is of degree at least $3$. 
\end{example}

\section{The combinatorial description of T-varieties}
\label{sec:torus-actions}
We consider a normal variety $X$ with an effective action of an algebraic torus $T \cong (\G_m)^r$. 
Then $X$ is called a $T$-variety of complexity $(\dim X - \dim T)$. 
In the following we restrict ourselves to the case of rational $T$-varieties of complexity one. 
 Our general reference is \cite{tvars}.

Let us denote the character lattice of the torus $T$ by $M$ and the dual lattice by $N$. For the associated vector spaces we write $M_\QQ$ and $N_\QQ$.

For a polyhedron $\Delta \subset N_\QQ$ we consider its tail cone $\tail(\Delta) := \{v \in N_\QQ \mid \Delta + \QQ_{\geq 0} \cdot v = \Delta \}$. 
Now, we consider polyhedral complexes $\Xi$ in $N_\QQ$ such that the set of tail cones has the structure of a fan, which is called the tail fan of $\Xi$ and will be denoted by $\tail(\Xi)$. 
Consider a pair $\fan = (\sum_{P\in \PP^1} \fan_P \otimes P,\sdeg)$ where $\fan_P$ are polyhedral complexes in $N_\QQ$ with some common tail fan $\Sigma$ and $\sdeg \subset |\Sigma|$. 
Here,  $\sum_P \fan_P \otimes P$ is just a formal sum. 
The complexes  $\fan_P$ are called \emph{slices} of $\fan$. 
We assume that there are only finitely many slices that differ from the tail fan $\tail(\fan) := \Sigma$. 
The set of the points $P \in \PP^1$ such that $\fan_P \neq \Sigma$ is called the support of $\fan$ and will be denoted by $\supp \fan$. 
Note that for every full-dimensional $\sigma\in \Sigma$ there is a unique polyhedron $\Delta_P^\sigma$ in $\fan_P$ with $\tail(\Delta_P^\sigma)=\sigma$.
\begin{definition}[f-divisor]
  A pair $\fan$ as above is called an \emph{f-divisor} if for any full-dimensional $\sigma\in\tail(\fan)$ we have either $\sdeg\cap\sigma = \emptyset$ or \[\sum_P \Delta^\sigma_P = \sdeg\cap\sigma \subsetneq \sigma.\]
\end{definition}

An f-divisor as above corresponds to a rational $T$-variety of complexity one, see \cite[Section~1]{IS10}. 
Moreover, this correspondence is even functorial. 
In particular, invariant open subvarieties correspond to f-divisors $\fan'$, 
such that $\fan^{\prime}_P \subset \fan_P$ as sets of polyhedra and $\sdeg' = |\tail(\fan')| \cap \sdeg$. 
For simplicity we write $\fan' \subset \fan$ in this situation. 
As a consequence of Proposition~1.6 in \cite{IS10}, f-divisors $\fan_1, \ldots, \fan_\ell \subset \fan$ give rise to an open covering if and only if their slices cover the slices of $\fan$, i.e.
\[|\fan| = \bigcup_i |\fan_i|.\]
\begin{remark}
  Affine charts correspond to f-divisors $\fan$ such that $\fan_P$ consists of a single polyhedron (and its faces) and $\sdeg = \sum_{P\in \PP^1} \fan_P$. These objects are called p-divisors in \cite{IS10,tvars}.
\end{remark}

\begin{example}
\label{exp:fdiv}
In Figure~\ref{fig:fdiv} we sketched the non-trivial slices of an f-divisor as well as its degree. It describes the blowup of the quadric threefold in one point, see \cite{suess:picbook}.
  \begin{figure}[htbp]
    \centering
    \includegraphics{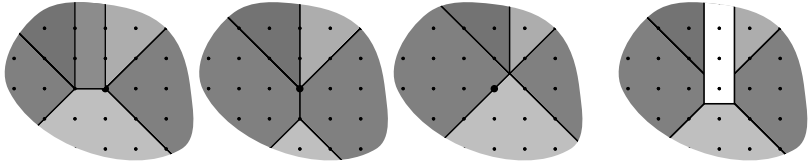}
    \caption{An f-divisor}
    \label{fig:fdiv}
  \end{figure}
\end{example}

\begin{lemma}[{\cite[Remark 1.8.]{1244.14044}}]
\label{lem:toric}
  An f-divisor describes a subtorus action on a toric variety if and only if
  $\fan_P$ equals a lattice translate of the tail fan for all but at most two $P \in \PP^1$.
\end{lemma}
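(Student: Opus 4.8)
The plan is to prove both directions through the \emph{toric downgrade} construction, realizing the extra torus factor as a lattice direction $\ZZ$ complementary to $N$. I set $N' = N \oplus \ZZ$ and let $p\colon N'_\QQ \to \QQ$ be the projection onto the second factor. The quotient $\QQ \cong N'_\QQ/N_\QQ$ carries the fan of $\PP^1$ with the two rays $\QQ_{\ge 0}$ and $\QQ_{\le 0}$, so that the associated one-dimensional quotient torus $\G_m = T'/T$ acts on the base $\PP^1$ in the standard way, with fixed points $0$ and $\infty$. The dictionary I use throughout: for a cone $\sigma' \subset N'_\QQ$, its downgrade with respect to the subtorus $T_N$ has slices given by the horizontal sections $\{v \in N_\QQ \mid (v,1)\in\sigma'\}$ at $0$ and $\{v \in N_\QQ \mid (v,-1)\in\sigma'\}$ at $\infty$, with tail cone $\sigma = \sigma' \cap (N_\QQ \times \{0\})$.

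For the forward implication, suppose $T$ acts on a toric variety $X = X_{\Sigma'}$ as a codimension-one subtorus of its big torus $T'$ (codimension one because $\dim X = \dim T + 1$ and $X$ toric forces $\dim T' = \dim X$). The quotient $\G_m = T'/T$ acts on $X$ and hence on the rational quotient $\PP^1$, fixing only $0$ and $\infty$. Since $\G_m \subset T'$ acts on the f-divisor by translating each slice $\fan_P$ to $\fan_{g\cdot P}$ by the lattice vector read off from $p$, any two slices over points of the same $\G_m$-orbit are lattice translates of one another. As $\G_m$ acts transitively on $\PP^1 \setminus \{0,\infty\}$, every slice over such a point is a lattice translate of a single generic slice, which is $\tail(\fan)$. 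This gives the ``only if.''

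For the converse, assume $\fan_P$ is a lattice translate $v_P + \Sigma$ of $\Sigma = \tail(\fan)$ for all $P$ outside a two-point set, which after an automorphism of $\PP^1$ I take to be $\{0,\infty\}$. First I normalize: translating each off-special slice back by $-v_P$ and dumping the residual $+\sum_P v_P$ into the fibers over $0$ and $\infty$ yields a family of translations summing to zero, hence a principal divisor, so the resulting f-divisor is equivalent to $\fan$ and now has every slice away from $0,\infty$ equal to $\Sigma$. Then, for each full-dimensional $\sigma \in \Sigma$ with distinguished polyhedra $\Delta_0^\sigma \in \fan_0$ and $\Delta_\infty^\sigma \in \fan_\infty$, I set
\[
\sigma' = \cone\big( (\Delta_0^\sigma \times \{1\}) \cup (\Delta_\infty^\sigma \times \{-1\}) \cup (\sigma \times \{0\}) \big) \subset N'_\QQ,
\]
and claim that the $\sigma'$, together with their faces, assemble into a fan $\Sigma'$ whose downgrade with respect to $T_N$ returns $\fan$: the slices at $0$ and $\infty$ recover $\Delta_0^\sigma$ and $\Delta_\infty^\sigma$, the tail cones recover $\Sigma$, and all other slices are trivial.

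The main obstacle will be the two verification steps in the converse, and I expect the fan-compatibility check to be the crux. Strong convexity of each $\sigma'$ and the requirement that $\sigma' \cap \tau'$ be a common face for $\sigma,\tau \in \Sigma$ are not automatic: a priori the height-$1$ section of $\sigma'$ can be strictly larger than $\Delta_0^\sigma$ because of contributions mixing $\Delta_\infty^\sigma\times\{-1\}$ with $\Delta_0^\sigma\times\{1\}$. This is exactly where the f-divisor axiom enters, since the condition $\sum_P \Delta_P^\sigma = \sdeg \cap \sigma \subsetneq \sigma$ (or $\sdeg\cap\sigma=\emptyset$) controls the relative position of $\Delta_0^\sigma$ and $\Delta_\infty^\sigma$, forcing their homogenizations to meet only along $\sigma \times \{0\}$ and making the faces match across the tail fan. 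The normalization step is comparatively routine, resting only on the fact that the degree-zero constraint for divisors on $\PP^1$ is exactly balanced by the freedom to absorb the leftover translation into the still-free fibers over $0$ and $\infty$.
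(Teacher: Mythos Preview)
The paper does not actually prove this lemma: it is stated with a citation to \cite[Remark~1.8]{1244.14044} and no argument is given. So there is no ``paper's own proof'' to compare against; you are supplying what the authors deliberately outsourced.

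Your approach via toric downgrade is the standard one and is essentially how the cited reference argues. A few remarks on the sketch itself. In the forward direction, the phrasing ``$\G_m \subset T'$ acts on the f-divisor by translating each slice $\fan_P$ to $\fan_{g\cdot P}$ by the lattice vector read off from $p$'' is a little muddled: $\G_m$ is a quotient of $T'$, not canonically a subgroup, and the cleaner statement is simply that the downgrade of a fan $\Sigma' \subset N'_\QQ$ along $N' \to N'/N \cong \ZZ$ produces an f-divisor supported only at the two torus-fixed points of $\PP^1$, so all other slices are \emph{equal} to the tail fan (not merely lattice translates). The ``lattice translate'' wording in the lemma just reflects the freedom to twist by a principal polyhedral divisor, which you handle correctly in your normalization step for the converse.

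For the converse you have the right construction and correctly flag the fan-compatibility check as the crux, including where the degree condition $\sum_P \Delta_P^\sigma \subsetneq \sigma$ enters (it guarantees strong convexity of each $\sigma'$). What you have written is a plan rather than a proof: you do not actually verify that the height-$\pm 1$ sections of $\sigma'$ return exactly $\Delta_0^\sigma$ and $\Delta_\infty^\sigma$, nor that intersections of the $\sigma'$ are common faces. You also restrict to full-dimensional $\sigma$; to get a genuine fan you must either argue that faces of your $\sigma'$ account for all lower-dimensional data, or extend the construction to the remaining polyhedra (in particular those with tail cone meeting $\sdeg$ versus not). None of this is hard, but as written it is an outline, which is fine given that the paper itself defers the result entirely to the literature.
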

In the language of f-divisors we also may describe torus invariant Cartier divisors by \emph{support functions}. 
A support function $h$ on a polyhedral subdivision $\Xi$ is a continuous function that is affine linear on every polyhedra in $\Xi$. 
We denote by $\lin h$ the linear part of $h$. 
This is a piecewise linear function on the tail fan defined as follows:
\[(\lin h)(v):=h(w+v)-h(w)\]
for some $w \in \Delta \in \Xi$ with $v \in \tail(\Delta)$.
\begin{definition}[Support function on $\fan$]
  A \emph{support function} $h$ on an f-divisor $\fan$ is a collection $\{h_P\}_{P \in \PP^1}$ of support functions on $\fan_P$ such that
  \begin{enumerate}
  \item all $h_P$  have the same linear part, which will be denoted by  $\lin h$,
  \item only finitely many of them differ from $\lin h$.
  \end{enumerate}
\end{definition}

We have two kinds of torus invariant prime divisors on $X(\fan)$. 
\emph{Horizontal} prime divisors correspond to rays $\rho \in \tail(\fan)^{(1)}$ that do not intersect $\sdeg$ and are denoted by $D_\rho$. 
\emph{Vertical} prime divisors correspond to vertices $v$ in the subdivisions $\fan_P$ and are denoted by $D_{P,v}$. 
Now, the divisor corresponding to the support function $h$ is given by
\[D_h = -\sum_\rho (\lin h)(\rho) \cdot D_\rho - \sum_{P,v} \mu(v) \cdot h_P(v) D_{P,v},\]
where we identify the ray with the ray generator and $\mu(v)$ denotes the minimal positive integer such that $\mu(v) \cdot v$ is a lattice element. In particular,
\begin{enumerate}
\item if $h_P\leq 0$ for all $P\in\PP^1$, then $D_h$ is effective.
\item in this case $X(\fan) \setminus\supp D_h$ is given by the f-divisor
\[[h=0]:= \left(\sum_P [h_P=0] \otimes P,\quad \sdeg \cap [\lin h = 0]\right),\]
where $[h_P=0]$ denotes the polyhedral subcomplex of $\fan_P$ consisting of those polyhedra on which $h_P$ vanishes.
\end{enumerate}

By \cite{tidiv}, every invariant Cartier divisor arises in this way. 
We have $D_h \sim D_{h'}$ if and only if $h_P - h'_P$ is affine linear for every $P$, i.e. $h_P - h'_P = \langle u, \cdot \rangle + a_P$, and $\sum_P a_P = 0$. 
Moreover, we have a criterion for ampleness expressed in the following notation. 
We denote by $\Box_h$ the polytope given by
\begin{equation}
  \label{eq:1}
  \Box_h = \{u \in M_\QQ \mid \langle u, \cdot \rangle \geq (\lin h)\}
\end{equation}
and consider concave piecewise affine function $h^*_P$ on $\Box_h$ as ``dual'' of $h_P$:
\[h_P^*(u) := \inf_v (\langle u, v \rangle - h_P(v)).\]
The definition implies that $h_P^*(u)$ is finite for $u \in \Box_h$.

\begin{theorem}
\label{thm:ample}
  If $D_h$ is ample, then $h_P$ is strongly concave for every $P \in \PP^1$ and $h_P^*(u) \geq 0$ for every $u \in \Box_h$.
\end{theorem}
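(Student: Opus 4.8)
The plan is to understand ampleness of $D_h$ on the T-variety $X(\fan)$ in terms of the induced polarization on the distinguished affine charts and on the generic fiber of the quotient map, so that the two asserted conditions—strong concavity of each $h_P$ and nonnegativity of $h_P^*$ on $\Box_h$—emerge as necessary consequences. Since ampleness implies relative ampleness over every affine base, I would first reduce to the local picture: for each full-dimensional $\sigma \in \tail(\fan)$ with $\sdeg \cap \sigma \neq \emptyset$, restrict to the invariant affine chart given by the p-divisor whose slices are the polyhedra $\Delta_P^\sigma$. On such a chart the theory of \cite{tidiv, tvars} records that Cartier data is precisely the collection of support functions, and ampleness on the whole variety forces a compatibility (essentially positivity) of the restricted support functions. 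The key is that strong concavity of $h_P$ is exactly the convex-geometric shadow of ampleness restricted to the fibers of the quotient, while the inequality $h_P^*(u) \geq 0$ encodes positivity of the degree of $D_h$ along the horizontal directions parametrized by $u \in \Box_h$.

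First I would fix an ample $D_h$ and record the ampleness/positivity criterion from \cite{tidiv}: ampleness is tested on the complete curves contained in the fibers of the quotient $X(\fan) \to \PP^1$ and on the sections corresponding to lattice points $u \in \Box_h$. Concretely, the degree of $D_h$ on the vertical curve associated to an edge of the slice $\fan_P$ is computed from the support function $h_P$, and positivity of all these degrees is equivalent to strong concavity of each $h_P$ (a vertical curve has positive intersection precisely when $h_P$ bends strictly downward across the corresponding wall). This gives the first assertion. For the second, I would interpret $u \in \Box_h$ as defining, via the weight decomposition $\O_X(D_h) = \bigoplus_u \O_X(D_h)_u$, an effective horizontal representative; the quantity $h_P^*(u) = \inf_v(\langle u, v\rangle - h_P(v))$ measures the coefficient of the vertical divisor $D_{P,v}$ in that representative, and ampleness forces these coefficients, suitably normalized, to be nonnegative, i.e.\ $h_P^*(u) \geq 0$.

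The main obstacle I anticipate is bookkeeping the passage between the global intersection-theoretic statement (ampleness) and the purely combinatorial inequalities, because the f-divisor formalism splits $D_h$ into horizontal contributions $-(\lin h)(\rho) D_\rho$ and vertical contributions $-\mu(v) h_P(v) D_{P,v}$, and one must verify that testing ampleness against the two natural families of invariant complete curves—the ones lying in a single fiber over $P \in \supp \fan$, and the ones mapping isomorphically to $\PP^1$—yields exactly the two stated conditions and nothing weaker. In particular, I would need to make sure that strong concavity of each $h_P$ individually (rather than just of $\lin h$) is genuinely required, which should follow because the slices $\fan_P$ with $P \in \supp \fan$ carry independent curve classes, and that $h_P^* \geq 0$ on all of $\Box_h$ (not merely on lattice points) follows by continuity and convexity of $h_P^*$ once it is checked on the dense set of $u$ for which the infimum is attained at a vertex. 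Since the statement is only the necessary direction of the ampleness criterion, I would not need the converse, which lightens the argument considerably: it suffices to exhibit, for each violated inequality, an invariant curve on which $D_h$ would have nonpositive degree.
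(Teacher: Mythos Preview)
The paper's own ``proof'' consists entirely of two citations: \cite[Theorem~3.28]{tidiv} for strong concavity of each $h_P$, and \cite[Prop.~3.1(i)]{IS10} for the inequality $h_P^*(u)\geq 0$. There is no argument given beyond invoking these references. Your proposal, by contrast, tries to sketch the actual content behind those citations, so the comparison is really between your sketch and the underlying results in the cited papers.

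Your treatment of strong concavity is essentially the right one and matches what is proved in \cite{tidiv}: the invariant complete curves lying in a fixed fiber over $P$ correspond to the compact edges of $\fan_P$, their intersection numbers with $D_h$ are governed by the bending of $h_P$ across the adjacent maximal cells, and strict positivity of all these numbers is precisely strong concavity of $h_P$. That part is fine.

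Your argument for $h_P^*(u)\geq 0$ is murkier. You write that $h_P^*(u)$ ``measures the coefficient of the vertical divisor $D_{P,v}$'' in an effective representative and that ``ampleness forces these coefficients \dots\ to be nonnegative''. It is true that the coefficient of $D_{P,v}$ in $D_h+\divisor(\chi^u)$ is $\mu(v)(\langle u,v\rangle - h_P(v))$, so $h_P^*(u)\geq 0$ is exactly the statement that \emph{this particular} semi-invariant representative is effective along all vertical divisors. But ampleness by itself does not guarantee that $D_h+\divisor(\chi^u)$ is effective for every $u\in\Box_h$; a priori one only knows that \emph{some} multiple $mD_h+\divisor(\chi^{mu}f)$ with $f\in\KK(\PP^1)^*$ is effective, and the freedom in $f$ shifts the individual $h_P^*(u)$ while preserving only their sum $\sum_P h_P^*(u)$. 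What Proposition~3.1(i) of \cite{IS10} actually records (and what the paper uses in the proof of Theorem~\ref{thm:all-flexible}) is the nonnegativity of $\sum_P h_P^*(u)$, interpreted as the degree of the induced divisor $\sum_P h_P^*(u)\cdot P$ on $\PP^1$. If you intend to prove the inequality pointwise in $P$, you would need an additional argument (or a normalization of $h$) that you have not supplied; if you are content with the summed version, your ``horizontal section'' idea is correct but should be phrased as a degree computation on $\PP^1$ rather than as effectivity of a fixed representative.
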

\begin{proof}
  By \cite[Theorem~3.28]{tidiv}, $h_P$ has to be strongly concave and by \cite[Prop.~3.1(i)]{IS10} we get that $h_P^*(u) \geq 0$.
\end{proof}

\section{Cones over projective T-varieties}\label{sec:tvars}
From now one we assume that the $T$-varieties we consider are proper over the base, i.e.~that all slices $\fan_P$ are \emph{subdivisions} of $N_\QQ$.

\begin{definition}[Equivariant covering by toric charts]
  A $T$-variety is called \emph{equivariantly covered by toric charts}, 
  if there is an open covering by toric varieties $U_i$ such that the torus $T$ acts as a subtorus of the embedded torus of $U_i$.
\end{definition}

\begin{lemma}
\label{lem:tcovered}The T-variety $X(\fan)$ is equivariantly covered by toric charts if and only if for every maximal polyhedron $\Delta$ in $\fan_P$, $P\in\PP$, all but at most two slices contain a lattice translate of $\tail(\Delta)$. 
  In particular, either $X(\fan)$ itself is toric or there is at most one $P \in \PP^1$ such that $\fan_P$ does not contain a lattice vertex. 
\end{lemma}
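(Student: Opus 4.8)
The plan is to prove both implications through the dictionary between invariant open subsets and sub-f-divisors recalled before the statement, using Lemma~\ref{lem:toric} as the toricity test and the slice-covering criterion deduced from \cite{IS10} as the covering test. To each maximal polyhedron $\Delta$ in $\fan_P$ with $\sigma:=\tail(\Delta)$ I associate the chart $\fan_\Delta\subset\fan$ whose slice at each $Q\in\PP^1$ is the polyhedron of $\fan_Q$ lying over $\Delta$ (at $Q=P$ this is $\Delta$ itself), with degree $|\sigma|\cap\sdeg$. For full-dimensional $\sigma$ this slice is exactly the unique cell $\Delta^\sigma_Q$ with tail cone $\sigma$, and $\fan_Q$ contains a lattice translate of $\sigma$ exactly when $\Delta^\sigma_Q$ is itself of the form $v+\sigma$ with $v\in N$, since any cell equal to $v+\sigma$ has full-dimensional tail cone $\sigma$ and hence coincides with $\Delta^\sigma_Q$. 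Because the slices at $P$ run through all maximal polyhedra of $\fan_P$ as $\Delta$ varies, the charts $\{U_\Delta=X(\fan_\Delta)\}$ always cover $X(\fan)$ by the slice-covering criterion.

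For the forward implication, suppose $X(\fan)$ is equivariantly covered by toric charts $U_i=X(\fan_i)$. Given a maximal $\Delta$ in $\fan_P$, the covering criterion forces $\Delta\in(\fan_i)_P$ for some $i$; since $U_i$ is toric, Lemma~\ref{lem:toric} gives that all but at most two slices $(\fan_i)_Q$ are lattice translates $v_Q+\tail(\fan_i)$ of the tail fan. As $\sigma=\tail(\Delta)$ is a cone of $\tail(\fan_i)$ and $(\fan_i)_Q\subseteq\fan_Q$, each such translated slice contains the cell $v_Q+\sigma$, a lattice translate of $\sigma$ lying in $\fan_Q$. Hence all but at most two slices of $\fan$ contain a lattice translate of $\tail(\Delta)$, as required (the exceptional two may depend on $\Delta$, which the statement permits).

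For the converse, the hypothesis says precisely that for each maximal $\Delta$ all but at most two of the slices of the chart $\fan_\Delta$ are lattice translates of $\sigma=\tail(\Delta)$; Lemma~\ref{lem:toric} then makes every $U_\Delta$ toric, and the charts cover $X(\fan)$ by the first paragraph. The step I expect to be the main obstacle is the bookkeeping that the slice of $\fan_\Delta$ used over $\Delta$ is a lattice translate if and only if $\fan_Q$ \emph{contains a lattice translate of $\tail(\Delta)$}: for full-dimensional $\sigma$ this is the clean uniqueness statement above, but for lower-dimensional (in particular bounded) cells one must verify that the cell of $\fan_Q$ employed by $\fan_\Delta$ is the one detected by the condition, and that the two possible exceptional slices together with the slice $P$ carrying $\Delta$ are counted consistently; note that the forced slice $P$ is a translate exactly when $\Delta$ is, so it never produces a spurious exception.

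Finally, for the ``in particular'' clause, assume the criterion holds but $X(\fan)$ is not toric, so by Lemma~\ref{lem:toric} at least three slices fail to be lattice translates of $\Sigma=\tail(\fan)$. If two slices $\fan_{P_1},\fan_{P_2}$ had no lattice vertex, then no cell of them could be a lattice translate of any cone, since a translate $v+\tau$ has the lattice apex $v$ as a vertex; thus $P_1,P_2$ must be the (at most two) exceptional slices for every maximal $\Delta$. Consequently, for every $Q\neq P_1,P_2$ and every maximal cell $\Delta'$ of $\fan_Q$, applying the criterion to $\Delta'$ itself shows $\Delta'$ is a lattice translate of its own tail cone. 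A complete subdivision all of whose maximal cells are translated cones can contain no bounded maximal cell (a lattice translate of $\{0\}$ is a single point, not full-dimensional) and must have all apices equal, because adjacent full-dimensional translated cones sharing a facet force a common vertex; hence $\fan_Q=v_Q+\Sigma$. Thus only $P_1$ and $P_2$ can fail to be translates, contradicting the existence of three non-translate slices, and therefore at most one slice lacks a lattice vertex.
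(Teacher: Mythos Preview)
Your approach coincides with the paper's: both parts rest on Lemma~\ref{lem:toric}, and your explicit construction of the charts $\fan_\Delta$ together with the slice-covering criterion from \cite{IS10} is precisely the content behind the paper's one-line ``the first part is a corollary of Lemma~\ref{lem:toric}.'' Your argument for the ``in particular'' clause is likewise parallel to the paper's: assume two slices $\fan_{P_1},\fan_{P_2}$ lack lattice vertices, show every other slice is a lattice translate of $\Sigma$, and conclude toricity.

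One step deserves tightening. Your assertion ``applying the criterion to $\Delta'$ itself shows $\Delta'$ is a lattice translate of its own tail cone'' is only justified when $\tail(\Delta')$ is full-dimensional: then uniqueness of the cell with that tail cone forces the lattice translate contained in $\fan_Q$ to be $\Delta'$. For a bounded maximal cell (tail cone $\{0\}$) the criterion only yields that $\fan_Q$ has some lattice vertex, not that $\Delta'$ is a point; your subsequent sentence ``a complete subdivision all of whose maximal cells are translated cones can contain no bounded maximal cell'' then uses as input what was not established. The clean fix---also implicit in the paper's terse argument---is to apply the criterion only to the cells $\Delta^\sigma_Q$ for $\sigma$ a \emph{maximal} cone of $\Sigma$, obtain $\Delta^\sigma_Q=v_\sigma+\sigma$, argue (as you already do) that adjacent apices agree so all $v_\sigma=v$, and then observe that $\bigcup_\sigma(v+\sigma)=v+|\Sigma|=N_\QQ$ exhausts the slice, so no other maximal cells can exist. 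You have all the ingredients; they just need to be reordered so that the absence of bounded maximal cells is a consequence rather than a premise.
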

\begin{proof}
  The first part is a corollary of Lemma~\ref{lem:toric}. 
  To prove the last statement, we consider two points $P,Q$ such that $\fan_P,\fan_Q$ contain only non-lattice vertices. 
  Now, consider a third point $R$ and a maximal polyhedron $\Delta \subset \fan_R$. 
  Since there is no lattice translate of $\tail(\Delta)$ in $\fan_P$ and $\fan_Q$, $\Delta$ itself must be a translated cone. 
  Notice that all maximal cones in $\fan_R$ must be translated by the same lattice point. 
  Indeed, otherwise they would not cover $N_\QQ$ and there would exist a different maximal dimensional polyhedron
   that could not be a lattice translate of its tail cone. 
   Hence, for any $R\notin\{P,Q\}$ the slice $\fan_R$ is just a translated tail fan. By Lemma~\ref{lem:toric}, $X(\fan)$ is a toric variety.
\end{proof}

\begin{remark}
  By \cite[Appendix]{APS13} this criterion is fulfilled for all smooth rational $T$-varieties of complexity one. Hence, they are covered by affine spaces.
\end{remark}

To get flexibility for every affine cone we need to strengthen the condition in Lemma~\ref{lem:tcovered}.
\begin{theorem}
\label{thm:all-flexible}
   Let $X=\X(\fan)$ be a T-variety such that for any maximal polyhedron $\Delta \in \fan_y$, $y\in\PP^1$, 
   at most two slices contain a polyhedron with the same tail cone $\tail(\Delta)$ that is not a lattice translate of $\tail(\Delta)$. 
   Then for every very ample divisor $H$ the corresponding  affine cone is flexible.
\end{theorem}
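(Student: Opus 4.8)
The plan is to verify the hypotheses of Theorem~\ref{thm:flexible-criterion} with $Y=X(\fan)$: I would construct an equivariant open covering of $X$ by flexible affine charts $U_i$ whose complements $X\setminus U_i$ are the supports of effective divisors $D_i$ linearly equivalent to the very ample $H$, and then quote the criterion. The combinatorial hypothesis of the theorem is exactly the announced strengthening of the toric-covering condition of Lemma~\ref{lem:tcovered}; combined with Lemma~\ref{lem:toric}, it guarantees that the affine charts one localizes to describe subtorus actions on toric varieties. Being affine charts of a complete variety, they carry no torus factor and are therefore flexible by \cite[Theorem~0.2]{flex-instances}. Thus all the real content lies in realizing the complement of each such toric chart as an effective divisor in the class of $H$.

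To this end I would fix a support function $h=\{h_P\}$ on $\fan$ with $D_h=H$. Since $H$ is ample, Theorem~\ref{thm:ample} applies: every slice function $h_P$ is strongly concave and $h_P^*(u)\ge 0$ for all $u\in\Box_h$. The candidate charts are complements of supports of translates of $h$. Indeed, replacing $h_P$ by $h'_P:=h_P-\langle u,\cdot\rangle-a_P$ with $u\in M$ and $\sum_P a_P=0$ leaves the class $D_h$ unchanged, and as soon as $h'_P\le 0$ for all $P$ the divisor $D_{h'}$ is effective and $X\setminus\supp D_{h'}$ is the open subvariety given by the f-divisor $[h'=0]$. A vertex $u$ of $\Box_h$ singles out a maximal cone $\sigma\in\tail(\fan)$, and strong concavity forces $h_P-\langle u,\cdot\rangle$ to attain its maximum $-h_P^*(u)\le 0$ at a vertex of the slice polyhedron $\Delta^\sigma_P$; choosing $a_P$ at or above $-h_P^*(u)$ then makes $h'$ effective and localizes $[h'=0]$ to a toric chart.

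For each maximal cone $\sigma$, and each admissible choice of a vanishing vertex in the at most two slices $\Delta^\sigma_P$ that are not lattice translates of $\sigma$, the recipe above produces a toric (hence flexible) affine chart $U$ together with an effective divisor $D=D_{h'}\sim H$ whose support is the complement $X\setminus U$. As $\sigma$ and these vertex choices range over all possibilities, the zero loci $[h'=0]$ exhaust the slices of $\fan$, so by the covering criterion \cite[Prop.~1.6]{IS10} the resulting charts cover $X$. Applying Theorem~\ref{thm:flexible-criterion} to this covering with the divisors $D_i$ then yields flexibility of the affine cone over $X$ for the given $H$.

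The step I expect to be the main obstacle is the simultaneous control of effectivity, of the vertical degree, and of the support. The natural choice $a_P=-h_P^*(u)$ makes $h'$ vanish at the chosen vertex in every slice, but in general $\sum_P\big(-h_P^*(u)\big)=-\sum_P h_P^*(u)\le 0$, so the single constraint $\sum_P a_P=0$ cannot be met unless the non-negative ``vertical degree'' $\sum_P h_P^*(u)$ is absorbed by raising some $a_P$ strictly above $-h_P^*(u)$. One must then show that this excess can always be redistributed over $\PP^1$ so that, on the one hand, $h'_P\le 0$ holds globally with equality confined to the polyhedron defining $U$ --- so that $\supp D_{h'}$ is \emph{precisely} the complement $X\setminus U$ demanded by Theorem~\ref{thm:flexible-criterion}, rather than a proper subvariety --- and, on the other hand, every invariant boundary divisor is avoided by \emph{some} chart so that the covering is genuine. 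This is exactly where the bound of at most two non-translate slices is decisive: it keeps the charts toric, and it leaves enough translate slices free to carry the vertical degree without forcing a boundary component out of the support; a third genuinely non-toric slice would both break the flexible toric structure of the chart and over-determine the shifts, obstructing effectivity in the correct class.
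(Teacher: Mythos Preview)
Your overall strategy is exactly the paper's: reduce to Theorem~\ref{thm:flexible-criterion} by producing, for each maximal polyhedron $\Delta\subset\fan_Q$, a support function equivalent to $h$ that is non-positive and whose zero locus is a toric p-divisor containing $\Delta$. You also isolate the genuine crux correctly: the constraint $\sum_P a_P=0$ forces you to absorb the non-negative vertical degree $\sum_P h_P^*(u)$ somewhere, and this is indeed the only non-formal step in the argument.

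The gap is in your proposed resolution. You suggest spreading the excess over the \emph{translate} slices, reasoning that the ``at most two'' hypothesis leaves plenty of them available. But a translate slice cannot absorb any positive amount: there $\Delta_P=v_P+\sigma$ is a translated cone, so $h_P-\langle u,\cdot\rangle$ attains its maximum at the single vertex $v_P$, and raising $a_P$ strictly above $-h_P^*(u)$ forces $[h'_P=0]=\emptyset$. An empty slice removes $P$ from the base; once even one point is removed you are over $\A^1$, where toricity tolerates only \emph{one} further non-translate slice, yet both genuine non-translate slices $0,\infty$ persist. Thus translate slices carry no vertical degree, and your heuristic that the two-slice bound ``frees'' them is inverted.

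The paper does the opposite: it concentrates the entire excess at a single \emph{non-translate} slice, say $\infty$. With $h':=h-u$ one sets
\[
h_P^\infty:=h'_P-\max h'_P\ \ (P\neq\infty),\qquad h_\infty^\infty:=h'_\infty+\sum_{P\neq\infty}\max h'_P.
\]
For $P\neq\infty$ this gives $[h_P^\infty=0]=\Delta_P$, so outside $\{0,\infty\}$ every slice is a lattice translate of $\tail(\Delta)$ and the chart is toric. Effectivity at $\infty$ is the inequality $h'_\infty(v)\le\sum_{P\neq\infty}h_P^{\prime*}(0)$, and since $\max h'_P=-h_P^{\prime*}(0)$ this is exactly the ampleness bound $\sum_P h_P^{\prime*}(0)\ge 0$ of Theorem~\ref{thm:ample} evaluated at $0\in\Box_{h'}$. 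So the role of the two-slice hypothesis is not to free translate slices for absorption, but to guarantee that after dumping the excess at $\infty$ only the slices over $0$ and $\infty$ can fail to be lattice translates of $\tail(\Delta)$.
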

\begin{proof}
By Theorem~\ref{thm:flexible-criterion} it is enough to show that there exists an equivariant $H$-polar covering by toric charts.
 Let us first rephrase this condition in terms of f-divisors.
  For every maximal polyhedron $\Delta_P \in \fan_P$ there exists a strictly concave non-positive support function $h$ on $\fan$ corresponding to an effective divisor $D_h\sim H$, 
  such that $[h_P=0]=\Delta_P$ (i.e. $h_P|_{\Delta_P}\equiv 0$  and negative elsewhere).
   For being a toric covering additionally we have to impose that $[h = 0]$ has only two slices that are not lattice translates of the tail fan.

We now construct such a covering for some very ample divisor $H=D_h$ corresponding to a support function $h$. 
Fix a maximal polyhedron $\Delta \subset \fan_Q$.
Then $h_Q|_{\Delta}$ is affine linear, i.e. $h_Q(v)=\langle u, v \rangle + a$. 
By concavity this implies $u \in \Box_h$, with $\Box_h$ defined as in (\ref{eq:1}). 
We now consider $h':=h-u$ with $h_P'(v):=h_P(v) - \langle u, v \rangle$. 
Now, $h'_P$ is again strongly concave and obtains its maximum at a polyhedron $\Delta_P$ with tail cone $\tail(\Delta_P) = \tail(\Delta)$. 
Moreover, by construction we have $0 \in \Box_{h'}$.

By our precondition, we may assume without loss of generality that for every point $R\in\PP\setminus\{0,\infty\}$
the polyhedron $\Delta_R$ is a lattice translate of $\tail( \Delta)$. Assume further $Q\neq\infty$
and introduce $h^\infty$ by
\[
\begin{cases}
  h_P^{\infty}(v) :=h'_P(v) - \max\mathrm{Im\,} h'_P\quad \mbox{ for } P \neq \infty,\\
 h_\infty^\infty(v) := h'_{\infty}(v) + \sum_{P \neq \infty} \max\mathrm{Im\,}  h'_P. 
\end{cases}
\]
It remains to check that $h'_\infty(v) + \sum_{P \neq \infty} \max \mathrm{Im\,} h^\prime_P \leq 0$ to see that $D_{h^\infty}$ is indeed effective.
Recall that we have $0 \in \Box_{h'}$. The claim follows from the ampleness of $D_{h'}$ and Theorem~\ref{thm:ample}.

Now, by construction we have 
$D_{h^\infty}\sim H$ and $[h^\infty_Q=0]=\Delta$. Moreover, $[h_P^\infty=0]$ is a lattice translate of $\tail(\Delta)$ for each $P\notin\{0,\infty\}$. Then it describes a toric chart.



Taking these toric charts for every maximal polyhedron provides us with an $H$-polar covering. 
Now, our result follows by Theorem~\ref{thm:flexible-criterion}.
\end{proof}

\conefanos
\begin{proof}
 For all Fano threefolds from Theorem \ref{thm:cones-over-Fanos} the corresponding f-divisors are listed in \cite{suess:picbook}. 
 One can easily check that the precondition of Theorem~\ref{thm:all-flexible} is fulfilled in every case.
\end{proof}
\begin{example}
Let us illustrate the difference of assumptions in Lemma \ref{lem:tcovered} and Theorem \ref{thm:all-flexible}. 
In the lemma we are allowed to \emph{choose} the polyhedron with the given tail cone. 
Hence, if we consider the variety given by the slice $(-\infty,-1],[-1,1],[1,\infty)$ taken three times, then it does satisfy the assumptions. 
Indeed, if we take the maximal polytope $[-1,1]$ in one slice, in other two slices we can take just the vertex $\{1\}$, which is a lattice shift of the tail cone $\{0\}$. 
On the other hand in the theorem we ask for \emph{all} polyhedra with the given tail cone. 
Here, we get three times $[-1,1]$ which is not a lattice translate of $\{0\}$. 
Such a difference is only possible for cones that are not full-dimensional.
\end{example}
\begin{example}
  We are coming back to the blowup of the quadric threefold from Example~\ref{exp:fdiv}. 
  We may check that the corresponding f-divisor in Figure~\ref{fig:fdiv} fulfills the condition of Theorem~\ref{thm:all-flexible}. 
  Hence, all affine cones over the blowup of the quadric threefold are flexible.
\end{example}

\begin{example}
  The hypersurface
  $\VV(x_0y_0^2+x_1y_1^2+x_2y_2^2) \subset \PP^2 \times \PP^2$
  is 2.24 from our list in Theorem~\ref{thm:cones-over-Fanos}. 
  Hence, every affine cone over this variety is flexible. In particular, this is true for the cone over the Segre embedding.
\end{example}

\section{Total coordinate spaces}
\label{sec:total-coord-spaces}

We recall the definition of Cox rings.
\begin{definition}[Cox sheaf, Cox ring, universal torsor, total coordinate space]
  Let $X$ be a normal variety, whose class group is a free abelian group generated by divisor classes $D_1,\ldots,D_r$. The \emph{Cox sheaf} of $X$ is defined by
\[\Cox = \bigoplus_{\vec{a} \in \ZZ_{\geq 0}^r} \CO\left({\textstyle \sum_{i=1}^r}\; a_i D_i\right).\]
It becomes a sheaf of $\CO_X$-algebras via the usual multiplication of sections. Its global sections are called the \emph{Cox ring} denoted by $\Cox(X)$.

The relative spectrum $\hat X = \Spec_X(\Cox)$ is called the \emph{universal torsor} of $X$. 
It is an open subset of the absolute spectrum $\overline X = \Spec(\Cox(X))$, which is called the \emph{total coordinate space} of $X$.
 By construction, the Cox ring is graded by the class group of $X$ inducing an action of the torus $\Spec k[\Cl(X)]$ on the total coordinate space.
\end{definition}

In the following we are studying flexibility of total coordinate spaces for several classes of varieties.

\subsection{Del Pezzo surfaces}
\label{sec:del-pezzo-surfaces}
Since the smooth del Pezzo surfaces of degrees 6,7,8, and 9 are toric, their total coordinate spaces are just affine spaces and hence flexible. 
The remaining del Pezzo surfaces are blowups $X_r$ of $\PP^2$ in $r$ points of general position, where $4 \leq r \leq 8$. 
Their Cox rings are described for example in \cite{zbMATH02158894}, \cite{zbMATH05168539}, and \cite{zbMATH05652990}.

An exceptional curve on $X$ is a curve of self-intersection $-1$ and anti-canonical degree $1$. 
On every del Pezzo surface there are only finitely many of them. 
We will  use  the following facts from \cite{zbMATH02158894}.
\begin{theorem}[{\cite[Thm 3.2 \& Prop. 3.4]{zbMATH02158894}}]
\label{thm:delPezzo-cox-generators}
Let $e_1 \ldots, e_N$ be sections corresponding to exceptional curves on a del Pezzo surface $X_r$ and $I$ be the ideal of their relations. Then
\begin{enumerate}
\item $\Cox(X_r)=\K[e_1,\ldots,e_N]/I$ for $4\leq r\leq 7$; \label{item:generators1}
\item $\Cox(X_8)=\K[e_1,\ldots,e_N]/I\oplus\langle f_1,f_2\rangle_\K$ as a vector space, \\
where 
$f_1, f_2 \in H^0(X_8,\CO(-K_{X_8}))$ are elements of degree one with respect to
 the $\ZZ$-grading by the anti-canonical degree of a divisor class.
\label{item:degree2}
\end{enumerate}
\end{theorem}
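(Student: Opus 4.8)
The plan is to compute $\Cox(X_r) = \bigoplus_D H^0(X_r, \O_{X_r}(D))$, the sum over divisor classes $D \in \pic(X_r) = \ZZ^{r+1}$ (with nonzero summand only for effective $D$), by showing it is generated by the distinguished sections sitting in the lowest layers of the grading by \emph{anticanonical degree} $D \mapsto D\cdot(-K_{X_r})$. First I would record that a nonzero effective class has anticanonical degree $\ge 1$ (write it as a sum of curves, each meeting the ample class $-K_{X_r}$ positively), and that the effective classes of anticanonical degree exactly $1$ are precisely the exceptional curves $E$ — each rigid with $h^0(\O_{X_r}(E))=1$, which fixes the sections $e_i$ — together with, when $X_r$ has degree $1$ (that is, $r=8$), the anticanonical class itself: by adjunction such a class has odd self-intersection, and ampleness of $-K_{X_r}$ restricts it to a $(-1)$-curve unless $r=8$, where $-K_{X_8}$ occurs, with $h^0(-K_{X_8}) = 1 + (-K_{X_8})^2 = 2$. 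This already isolates the exceptional feature of the statement: for $r\le 7$ one has $\deg(-K_{X_r}) = 9 - r \ge 2$, so $-K_{X_r}$ is not a bottom-layer class and will decompose, whereas for $r=8$ the class $-K_{X_8}$ is indecomposable — no sum of two nonzero effective classes has anticanonical degree $1$ — so its two-dimensional space of sections cannot be a product and must be adjoined as the extra generators $f_1,f_2$.

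The body is an induction on anticanonical degree proving, for every effective $D$ with $D\cdot(-K_{X_r}) \ge 2$, that $H^0(\O_{X_r}(D))$ is spanned by products of sections of effective classes of strictly smaller degree. I would split this into two cases. If $D$ is not nef, some exceptional curve $E$ satisfies $D\cdot E < 0$, so $E$ is a fixed component of $|D|$ and multiplication by $e_E$ is an isomorphism $H^0(\O_{X_r}(D-E)) \xrightarrow{\sim} H^0(\O_{X_r}(D))$; since $D-E$ has smaller anticanonical degree this case closes at once. If $D$ is nef, then — nef divisors on del Pezzo surfaces being base-point-free — $|D|$ is free, and the claim reduces to surjectivity of multiplication maps. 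The positivity input here is the vanishing $H^1(\O_{X_r}(N)) = 0$ for every nef $N$, which holds because $N - K_{X_r}$ is ample (nef plus ample), so Kodaira/Kawamata--Viehweg applies.

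Using this vanishing I would, for a well-chosen effective decomposition $D = A + B$ with $A$ nef and $B$ of small self-intersection, restrict to a smooth member $C \in |B|$ (Bertini) via
\[0 \to \O_{X_r}(A) \to \O_{X_r}(D) \to \O_C(D|_C) \to 0,\]
where $H^1(\O_{X_r}(A)) = 0$ makes $H^0(\O_{X_r}(D)) \to H^0(C, D|_C)$ surjective with kernel the products $H^0(\O_{X_r}(A))\cdot s_B$, $s_B$ cutting out $C$. One is then reduced to projective normality on the curve $C$, which for the low-degree summands that occur is rational or elliptic, where the required surjectivity of multiplication is classical.

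The main obstacle is the bottom of the nef induction, the conic (degree-$2$) classes $F$: these are nef with $h^0(\O_{X_r}(F)) = 2$ but admit no nef summand of positive degree, so $H^0(\O_{X_r}(F))$ is \emph{not} the image of a single product $H^0(\O_{X_r}(E))\otimes H^0(\O_{X_r}(E'))$, which is one-dimensional. Here I would use that $F$ has several reducible members $F = E_i + E_i'$ with $E_i, E_i'$ exceptional, and show that the products $e_{E_i} e_{E_i'}$ — sections of $|F|$ vanishing on distinct reducible fibres — span the pencil. Organizing the finitely many classes $D$ to be checked by orbit representatives under the Weyl group $W(E_r)$ acting on $\pic(X_r)$ (preserving $-K$, the intersection form, and the exceptional classes) keeps this casework finite; assembling it to conclude $\Cox(X_r) = \K[e_1,\dots,e_N]/I$ for $r \le 7$, and $\Cox(X_8) = \K[e_1,\dots,e_N]/I \oplus \langle f_1,f_2\rangle_{\K}$ for $r=8$, is where the real work lies.
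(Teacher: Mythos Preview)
The paper does not prove this theorem: it is quoted verbatim from Batyrev--Popov \cite[Thm~3.2 \& Prop.~3.4]{zbMATH02158894} and used as a black box in the proof of Theorem~\ref{thm:del-pezzo-cox-flexible}. There is therefore no proof in the present paper to compare your proposal against.

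For what it is worth, your outline is essentially the strategy of the cited reference: grade $\Cox(X_r)$ by anticanonical degree, identify the degree-$1$ layer as the exceptional sections (plus, for $r=8$, a basis $f_1,f_2$ of $H^0(-K_{X_8})$, this class being indecomposable precisely when $(-K)^2=1$), and climb by induction using that a non-nef effective class has an exceptional fixed component while a nef class is base-point-free on a del Pezzo surface. The inductive step is closed, as you say, by Kawamata--Viehweg vanishing for nef classes and restriction to smooth members of low genus; your identification of the conic (pencil) classes as the one place where a single product of exceptional sections does not suffice, and of the Weyl group $W(E_r)$ as the bookkeeping device, matches the structure of the original argument. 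The sketch is sound as an outline; the remaining work is exactly the finite casework that \cite{zbMATH02158894} carries out.
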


Theorem~\ref{thm:delPezzo-cox-generators} shows that the Cox ring is generated by elements of degree $1$ and $Y_r:=\Proj(\Cox(X_r))$ comes with an embedding into $\PP^{N-1}$, 
where $N$ (resp. $N-2$) is the number of exceptional curves in the case $4\leq r \leq 7$ (resp. $r=8$).

 In this situation the total coordinate space $\overline X_r$ is the affine cone over this embedding.

\begin{proposition}
\label{prop:blowup-coxring}
Let $e$ be a section corresponding to an exceptional curve. Then the principal open subset $(Y_r)_e$ is isomorphic to $X_{r-1}$.
\end{proposition}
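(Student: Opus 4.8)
The plan is to realise $(Y_r)_e$ as the spectrum of the anticanonical-degree-zero part of a localized Cox ring and to identify that ring with $\Cox(X_{r-1})$; a dimension count, $\dim (Y_r)_e = \dim Y_r = \dim\overline{X}_r-1 = r+2 = \dim \overline{X}_{r-1}$, already shows that the target must be the total coordinate space $\overline{X}_{r-1}$ rather than the surface itself. Since $Y_r=\Proj(\Cox(X_r))$ for the $\ZZ$-grading by anticanonical degree and the section $e$ has anticanonical degree $1$, the principal open subset is $(Y_r)_e=\Spec\big((\Cox(X_r)_e)_0\big)$, where the subscript denotes the anticanonical-degree-zero component. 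I would let $E$ be the exceptional curve cut out by $e$ and let $\phi\colon X_r\to X_{r-1}$ be its contraction, a blowdown to a smooth point of a del Pezzo surface of degree one higher.

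First I would record the splitting of class groups. Since $E$ is a $(-1)$-curve contracted by $\phi$, the pullback $\phi^*\colon \Cl(X_{r-1})\to\Cl(X_r)$ is injective with image $[E]^\perp$, and $\Cl(X_r)=\phi^*\Cl(X_{r-1})\oplus\ZZ[E]$. The essential geometric input is that $\phi_*\CO_{X_r}(mE)=\CO_{X_{r-1}}$ for every $m\ge 0$, a local computation at the smooth contracted point. By the projection formula this yields $H^0\big(X_r,\CO(\phi^*D'+mE)\big)\cong H^0\big(X_{r-1},\CO(D')\big)$ for all $D'\in\Cl(X_{r-1})$ and $m\ge 0$, with the transition maps given by multiplication by $e$ being isomorphisms in this range. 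Passing to the localization, these identifications assemble into an isomorphism of $\Cl(X_r)$-graded rings
\[
\Cox(X_r)_e\;\cong\;\Cox(X_{r-1})\big[e^{\pm 1}\big],
\]
where $e$ is given the degree $[E]=(0,1)$ and $\Cox(X_{r-1})$ sits in the degrees $(D',0)$.

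It then remains to extract the anticanonical-degree-zero part. Using $-K_{X_r}=\phi^*(-K_{X_{r-1}})-E$, one computes that the class $\phi^*D'+mE$ has anticanonical degree $k'+m$, where $k'=-K_{X_{r-1}}\cdot D'$ is the anticanonical degree of $D'$ on $X_{r-1}$. Hence the degree-zero component of $\Cox(X_{r-1})\big[e^{\pm1}\big]$ consists exactly of the elements $s'\,e^{-k'}$ with $s'\in H^0(X_{r-1},\CO(D'))$, and the assignment $s'e^{-k'}\mapsto s'$ is a ring isomorphism $(\Cox(X_r)_e)_0\cong\Cox(X_{r-1})$, since the exponents $-k'$ are additive in $D'$. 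Taking spectra then gives $(Y_r)_e\cong\overline{X}_{r-1}$, as desired.

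The hard part will be the graded bookkeeping: one must check that the anticanonical grading interacts with the splitting $\Cl(X_r)=\phi^*\Cl(X_{r-1})\oplus\ZZ[E]$ precisely so that degree zero singles out one copy of each $H^0(X_{r-1},D')$, and that the multiplicative structure is preserved without twisting. The single nontrivial geometric ingredient, $\phi_*\CO_{X_r}(mE)=\CO_{X_{r-1}}$ for $m\ge0$, is exactly where the hypothesis that $e$ corresponds to an exceptional, that is a $(-1)$-, curve is used. I would also note an alternative, more computational route via the explicit presentation $\Cox(X_r)=\K[e_1,\dots,e_N]/I$ of Theorem~\ref{thm:delPezzo-cox-generators}, identifying the generators $e_i/e$ that survive in the chart with the exceptional sections of $X_{r-1}$ and matching the ideals; but this demands a case-by-case analysis of how the exceptional curves of $X_r$ behave under contraction, which the cohomological argument above avoids.
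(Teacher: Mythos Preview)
Your argument is correct. You rightly observe, via the dimension count, that the intended target is the total coordinate space $\overline{X}_{r-1}$ rather than the surface $X_{r-1}$; this is confirmed by the way the paper actually uses the proposition in the proof of Theorem~\ref{thm:del-pezzo-cox-flexible}, where it says the principal open subsets ``are isomorphic to $\overline X_{r-1}$ and hence flexible by induction hypothesis.''

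As for comparison with the paper: there is nothing to compare, because the paper does not give a proof. It simply refers to the proof of Proposition~3.4 in the cited reference of Batyrev--Popov. Your argument---localize $\Cox(X_r)$ at $e$, use the splitting $\Cl(X_r)=\phi^*\Cl(X_{r-1})\oplus\ZZ[E]$ together with $\phi_*\CO_{X_r}(mE)=\CO_{X_{r-1}}$ for $m\ge 0$ to identify $\Cox(X_r)_e\cong\Cox(X_{r-1})[e^{\pm1}]$, then read off the anticanonical-degree-zero part using $-K_{X_r}=\phi^*(-K_{X_{r-1}})-E$---is exactly the standard way one extracts this from the Cox-ring formalism, and it is precisely the content hidden behind that citation. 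The multiplicative compatibility you flag as ``the hard part'' is indeed the only thing requiring care, and your observation that the exponent $-k'$ is additive in $D'$ is what settles it. The alternative route via the explicit presentation in Theorem~\ref{thm:delPezzo-cox-generators} that you mention at the end is closer in spirit to how the cited reference actually argues, but your cohomological approach is cleaner and avoids the case analysis.
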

\begin{proof}
  This can be found for example in the proof of Proposition 3.4 in \cite{zbMATH02158894}.
\end{proof}

\coxdPflex
\begin{proof}
  As said above, it is enough to check $X_r$ with $4 \leq r \leq 8$. We will go by induction. 
  The del Pezzo surface $X_3$ is toric. Therefore it has a flexible total coordinate space. 
  Now, consider $X_r$ with  $4 \leq r \leq 8$. Then we have seen that $\overline X_r$ is the affine cone over $Y_r$. 
  Moreover, principal open subsets corresponding to sections of exceptional curves are isomorphic to $\overline X_{r-1}$ and hence flexible by induction hypothesis.
   It remains to check that these principal open subsets cover $Y_r$ to conclude flexibility of $\overline X_r$ from Theorem~\ref{thm:flexible-criterion}. 
   For $4 \leq r \leq 7$ this follows directly from Theorem~\ref{thm:delPezzo-cox-generators}~(\ref{item:generators1}). 
   For the case $r=8$ we have to take care for the remaining generators. 
   By Theorem~\ref{thm:delPezzo-cox-generators}~(\ref{item:degree2}) their squares are contained 
in the ideal $(e_1 \ldots, e_N)$ generated by the sections corresponding to exceptional curves, 
but then the common vanishing of $e_1 \ldots, e_N$ implies the vanishing of the remaining generators and hence $Y_r = \bigcup_{i=1}^N (Y_r)_{e_i}$.
\end{proof}

\subsection{Smooth complexity-one T-varieties}
In \cite{cox} the Cox rings of T-varieties are studied. For the case of complexity-one action they have a very particular form.
\begin{proposition}[{\cite[Corollary 4.9]{cox}}]
\label{prop:cox}
  Let $\fan$ be an f-divisor and let us denote  by $\fan^\times$ the subset of rays in $\tail(\fan)^{(1)}$ that do not intersect $\sdeg$. 
  Then the Cox ring of $X(\fan)$ is given by
  \[\frac{\KK[S_\rho, T_{P,v} \mid \rho \in \fan^\times,  P \in \supp \fan, v \in \fan_P^{(0)}]}
{\langle z \cdot T^{\mu(0)} + T^{\mu(\infty)} + T^{\mu(z)} \mid z \in \supp \fan \cap \KK^*\rangle}\]
where $T^{\mu(P)} := \prod_{v \in \fan_P^{(0)}} T_{P,v}^{\mu(v)}$ and $\mu(v)$ denotes the minimal positive integer such that $\mu(v) \cdot v$ is a lattice element.
\end{proposition}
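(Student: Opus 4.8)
The plan is to identify the generators of $\Cox(X(\fan))$ with the canonical sections of the $T$-invariant prime divisors and to read off the relations from the geometry of the rational quotient $\PP^1$. First I would recall that for a normal variety with free class group and only constant invertible global functions — which holds here, since by assumption all slices $\fan_P$ subdivide $N_\QQ$ and $X(\fan)$ is proper over its base — the Cox ring is generated by the canonical sections $1_D$ of any finite family of prime divisors whose classes generate $\Cl(X)$. The $T$-invariant prime divisors of $X(\fan)$ are exactly the horizontal divisors $D_\rho$ for $\rho\in\fan^\times$ and the vertical divisors $D_{P,v}$ for vertices $v\in\fan_P^{(0)}$, and their classes generate $\Cl(X(\fan))$. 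Assigning to $S_\rho$ the section $1_{D_\rho}$ and to $T_{P,v}$ the section $1_{D_{P,v}}$ thus defines a $\Cl(X)$-graded surjection from the polynomial ring in the $S_\rho,T_{P,v}$ onto $\Cox(X(\fan))$; the whole problem is to compute its kernel.

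To justify that these sections really generate, I would use the weight decomposition of the global sections of an invariant divisor $D_h$. Decomposing a homogeneous element of $\Cox(X)$ into $T$-eigenspaces, the eigenspace of weight $u$ is, by the global sections formula \cite{IS10}, isomorphic to $\Gamma\!\left(\PP^1,\O_{\PP^1}\!\left(\sum_P \lfloor h_P^*(u)\rfloor\,[P]\right)\right)$, with $h_P^*$ and $\Box_h$ as in (\ref{eq:1}). Each such space carries a monomial basis, and pulling these monomials back and multiplying by $\chi^u$ expresses every homogeneous section as a monomial in the $S_\rho$ and the $T_{P,v}$, the horizontal directions being accounted for by the $S_\rho$ and the base ring $\KK[\PP^1]$ by the fibrewise products. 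This is a finite combinatorial check that yields surjectivity.

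Next I would determine the relations. The principal divisors coming from characters, namely $\divisor(\chi^u)=-\sum_\rho(\lin h)(\rho)D_\rho-\sum_{P,v}\mu(v)\langle u,v\rangle D_{P,v}$ for $h_P=\langle u,\cdot\rangle$, only encode the class-group relations among the generators and are absorbed into the grading. The genuine ring relations come from the base: since any two points of $\PP^1$ are linearly equivalent, all fibre classes $\sum_v\mu(v)D_{P,v}$ represent one class $F$ pulling back $\O_{\PP^1}(1)$, so the monomials $T^{\mu(P)}=\prod_v T_{P,v}^{\mu(v)}$ all lie in the single graded piece $\Gamma(X,\O(F))\cong\Gamma(\PP^1,\O_{\PP^1}(1))$, a two-dimensional vector space. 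Hence any three of them are linearly dependent. Choosing the coordinate on $\PP^1$ so that two distinguished points are $0,\infty$ and a third is $z\in\supp\fan\cap\KK^*$, and normalizing the canonical sections, the dependence among $T^{\mu(0)},T^{\mu(\infty)},T^{\mu(z)}$ takes exactly the trinomial form $z\cdot T^{\mu(0)}+T^{\mu(\infty)}+T^{\mu(z)}$, one relation per such $z$.

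It remains to prove that these trinomials generate the entire relation ideal, and this is the hard part. I would let $R$ be the quotient of the polynomial ring by the ideal of trinomials and study the induced surjection $R\twoheadrightarrow\Cox(X(\fan))$. Using the two pivot points $0,\infty$ one can eliminate so as to see that $R$ is an integral domain; its Krull dimension equals $\dim X+\rk\Cl(X)$, which also equals $\dim\Cox(X(\fan))$. A surjection of integral domains of equal finite dimension over a field, together with the normality of $\Cox(X)$, forces an isomorphism — equivalently, one invokes the uniqueness of the Cox ring as the normal finitely generated $\Cl(X)$-graded algebra realizing the prescribed divisorial data. The main obstacle is precisely establishing that the trinomial ideal is prime of the correct dimension and that no further relations are hidden in the higher graded pieces; controlling the multiplicities $\mu(v)$ and pinning down the scalar normalization of the trinomials are the technical points where the real content of \cite[Corollary 4.9]{cox} resides.
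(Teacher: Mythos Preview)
The paper does not give its own proof of this proposition: it is stated as a citation of \cite[Corollary~4.9]{cox} and used as a black box. There is therefore nothing in the paper to compare your argument against.

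Your sketch is a reasonable outline of how the cited result is established --- identifying the generators with canonical sections of invariant prime divisors, reading off the trinomial relations from the two-dimensional space $\Gamma(\PP^1,\O_{\PP^1}(1))$, and then arguing that the trinomial ideal is prime of the correct dimension --- but since the paper itself supplies no proof, any assessment of correctness would have to be made against \cite{cox} rather than against the present paper.
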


If we impose the additional condition that the T-variety is equivariantly covered by toric charts 
(which is fulfilled in the smooth case), then we can conclude the following.
\begin{proposition}\label{T1-trinom}
  The Cox ring of a complexity-one T-variety equivariantly covered by toric
  charts is isomorphic to
\begin{equation*}
\KK[S_1,\ldots,S_{n_S};T_{\ell,j}\mid 0\le \ell\le m, 1\le j\le n_\ell]/\langle {z_\ell \cdot A_0 +  A_1 + A_\ell}  \mid 2 \leq \ell \leq m \rangle,
\end{equation*}
where
\begin{enumerate}
\item $N, m, n_0,\ldots,n_m\in\ZZ_{>0}$;
\item $z_2,\ldots, z_m$ are distinct elements of $\KK^*$;
\item  for $\ell=0,\ldots,m$, $A_\ell$ is a monomial in $\KK[T_{\ell,1},\ldots,T_{\ell,n_\ell}]$;
\item for $\ell=1,\ldots,m$ the monomial $A_\ell$ is linear in at least one variable.\label{T1-trinom-lin}
\end{enumerate}
Moreover, if $X$ is Fano, then we may assume that $A_\ell$ for $\ell \geq 3$ is linear in each variable.
\end{proposition}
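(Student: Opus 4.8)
The Cox ring of a complexity-one T-variety equivariantly covered by toric charts has the stated trinomial form (from Proposition 4.9 / our Proposition \ref{prop:cox}), with an additional linearity condition in the Fano case.

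The plan is to start from the general presentation of the Cox ring given in Proposition~\ref{prop:cox} and simplify it using the equivariant toric-chart covering hypothesis. That presentation has generators $S_\rho$ (for rays $\rho \in \fan^\times$) and $T_{P,v}$ (for vertices $v$ in the slices $\fan_P$), with relations $z \cdot T^{\mu(0)} + T^{\mu(\infty)} + T^{\mu(z)}$ indexed by the points $z \in \supp\fan \cap \KK^*$. My first move is to relabel: set $S_1,\ldots,S_{n_S}$ to be the $S_\rho$, and group the $T_{P,v}$ by their base point $P$, writing them as $T_{\ell,j}$ where $\ell = 0,\ldots,m$ indexes the points of $\supp\fan$ (with $0$ and $\infty$ distinguished and $z_2,\ldots,z_m$ the remaining elements of $\KK^*$). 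Then $A_\ell := T^{\mu(z_\ell)} = \prod_{v \in \fan_{z_\ell}^{(0)}} T_{\ell,v}^{\mu(v)}$ is a monomial, so conditions (i)--(iii) are immediate from the definitions, and the relations take the form $z_\ell A_0 + A_1 + A_\ell$.

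The heart of the argument, and the step I expect to be the main obstacle, is condition~(\ref{T1-trinom-lin}): that for $\ell \geq 1$ the monomial $A_\ell$ is linear in at least one variable. This is exactly where the toric-chart hypothesis enters. By Lemma~\ref{lem:tcovered}, being equivariantly covered by toric charts means that for every maximal polyhedron $\Delta \subset \fan_P$, all but at most two slices contain a lattice translate of $\tail(\Delta)$; equivalently (as noted there), at most one slice $\fan_P$ fails to contain a lattice vertex. The exponent $\mu(v)$ is the smallest positive integer with $\mu(v)\cdot v$ a lattice point, so $\mu(v)=1$ precisely when $v$ is a lattice vertex. Thus for every slice $\fan_P$ other than the (at most one) exceptional one, some vertex $v$ is a lattice point, giving $\mu(v)=1$, i.e. $A_P$ is linear in $T_{P,v}$. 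I would choose the labeling so that the single possible non-lattice slice is placed at $P=0$ (the variable $A_0$, which is exempt from the linearity requirement); then every $A_\ell$ with $\ell \geq 1$ has a lattice vertex and is linear in at least one variable, establishing~(iv).

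For the final Fano assertion I would argue that the anticanonical ampleness forces the slices at all but two points to consist of lattice simplices so that \emph{every} vertex contributes an exponent $\mu(v)=1$, making $A_\ell$ linear in each variable for $\ell\geq 3$. Concretely, I would invoke the smoothness/covering structure together with the ampleness criterion of Theorem~\ref{thm:ample}: the condition $h_P^*(u)\geq 0$ on the box $\Box_h$, applied to an anticanonical support function, constrains the lattice widths of the slices, and one checks that for a Fano complexity-one T-variety the generic slices become unimodular, so all $\mu(v)=1$ there. After reducing to the two distinguished points $0,\infty$ and one further point, the remaining $z_\ell$ with $\ell\geq 3$ correspond to lattice-unimodular slices and hence yield monomials $A_\ell$ linear in each variable, as claimed.
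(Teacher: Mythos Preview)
Your treatment of conditions (i)--(iv) is correct and matches the paper exactly: you unpack Proposition~\ref{prop:cox}, relabel the generators by base point, observe that $\mu(v)=1$ precisely when $v$ is a lattice vertex, and then invoke Lemma~\ref{lem:tcovered} to place the (at most one) slice lacking a lattice vertex at the index $\ell=0$. This is just what the paper does.

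The Fano clause, however, has a genuine gap. You propose to deduce unimodularity of the generic slices from the ampleness criterion of Theorem~\ref{thm:ample} applied to an anticanonical support function, but the inequality $h_P^*(u)\geq 0$ only says something about the values of $h_P$ relative to the box $\Box_h$; it does not, by itself, force the vertices of $\fan_P$ to be lattice points, and you do not supply the missing link. The phrase ``one checks that\ldots the generic slices become unimodular'' is exactly the step that needs an argument, and I do not see how to extract it from Theorem~\ref{thm:ample} alone.

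The paper takes a completely different route for this part: it uses that the Cox ring of a Fano variety is factorial and has log-terminal singularities (citing \cite{brown2011singularities,gongyo2012characterization} and \cite{1073.14001}), and then appeals to a classification result for trinomial singularities (Remark~6.4 in \cite{tsing}) which constrains the exponents in the monomials $A_\ell$. In other words, the Fano condition is felt through the singularities of the total coordinate space rather than through the polyhedral ampleness criterion on the base. Your approach would need a substantially new argument to be made rigorous.
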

\begin{proof}
  The first statements follow directly from Proposition~\ref{prop:cox} and Lemma~\ref{lem:tcovered}. 
  For the Fano case note that the Cox ring of a Fano variety is log-terminal by \cite{brown2011singularities,gongyo2012characterization} and factorial by \cite{1073.14001}. 
  Hence, by Remark 6.4 in \cite{tsing} we obtain the last statement.
\end{proof}

\begin{proposition}\label{T-Cox-eq-flex}
Let $X$ be a complexity-one T-variety equivariantly covered by toric charts
and $\overline{X}$ be a total coordinate space of the Cox ring of $X$.
Then $\overline{X}$ is flexible.
\end{proposition}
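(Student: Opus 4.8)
The plan is to reduce to the explicit trinomial presentation of Proposition~\ref{T1-trinom} and then to apply the cone criterion of Theorem~\ref{thm:flexible-criterion}. Write $R=\Cox(X)$ in the form given there, with free variables $S_1,\dots,S_{n_S}$ and trinomial relations $z_\ell A_0+A_1+A_\ell$ for $2\le\ell\le m$, where each $A_\ell$ with $\ell\ge 1$ is linear in some distinguished variable $T_{\ell,j_\ell}$. Since the $S_i$ do not occur in the relations, $\overline X\cong \overline X_0\times\A^{n_S}$, where $\overline X_0=\Spec R_0$ and $R_0$ is the quotient by the trinomial relations of the polynomial ring in the $T$-variables alone. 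As the product of a flexible variety with an affine space is again flexible — the $\G_a$-actions of the flexible factor together with the translations $\partial_{S_i}$ span the tangent space at every smooth point, and both groups act transitively on the respective smooth loci — it suffices to prove that $\overline X_0$ is flexible. If $\overline X_0$ happens to be an affine space this is trivial, so I will assume otherwise.

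First I would exhibit $\overline X_0$ as an affine cone. The relations are $\Cl(X)$-homogeneous, so after projecting the $\Cl(X)$-grading along a sufficiently general ample class $H$ on $X$ one obtains an $\NN$-grading for which all $T$-variables have positive degree; here positivity has to be ensured by the pointedness of the weight cone, which I would verify for the varieties under consideration. With respect to this grading $\overline X_0=\AffCone_H Y$ for $Y=\Proj R_0$, embedded by $H$ into a weighted projective space. I would then produce the required covering of $Y$ by flexible charts. The hypothesis that $X$ is equivariantly covered by toric charts — reflected combinatorially by Lemma~\ref{lem:tcovered} and already used to derive the trinomial shape in Proposition~\ref{T1-trinom} — should lift to a covering of $Y$ by principal open subsets obtained by inverting the distinguished variables $T_{\ell,j_\ell}$ and their toric partners. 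Each such localization solves one linear relation per inverted variable, eliminating a variable together with a relation, so the chart is an affine toric variety; carrying no torus factor, it is flexible by \cite[Theorem~0.2]{flex-instances}. Each chart is the complement in $Y$ of a divisor supported on coordinate hyperplane sections, hence of a positive rational multiple of a section $\QQ$-equivalent to $H$; by the Remark following Theorem~\ref{thm:flexible-criterion} this suffices, and applying that theorem yields flexibility of $\overline X_0$, and with it of $\overline X$.

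The main obstacle will be this last step: I must check that the toric-chart covering of $X$ really descends to an $H$-polar covering of $Y$ by flexible charts — concretely, that the selected principal opens are toric without torus factors, that they actually cover $Y$ (equivalently, that the common vanishing of the inverted variables is empty on $Y$, not merely that each monomial $A_\ell$ vanishes), and that their complements all lie on the ray spanned by $H$ in the N\'eron--Severi space.

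A robust alternative, should the cone realization prove awkward, is to construct the $\G_a$-actions directly and apply Theorem~\ref{thm:AFKKZ}. The idea would be to build vertical locally nilpotent derivations that move the distinguished variables $T_{\ell,j_\ell}$ along the relations, homogenized in the manner of Lemma~\ref{lifting} so that they extend over the whole of $\overline X_0$, and to combine them with the translations $\partial_{S_i}$. One then verifies condition (iii) of Theorem~\ref{thm:AFKKZ}, namely that the velocity vectors of these actions span the tangent space at a general smooth point, and finally spreads the resulting flexible locus out using the transitivity of the $\Cl(X)$-torus action together with the irreducibility of $\overline X$.
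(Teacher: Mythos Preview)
Your primary route through Theorem~\ref{thm:flexible-criterion} has a concrete gap. To eliminate the relation $z_\ell A_0+A_1+A_\ell=0$ using the linearity of $A_\ell$ in $T_{\ell,j_\ell}$, what must be inverted is the monomial cofactor $B_\ell=A_\ell/T_{\ell,j_\ell}$, not $T_{\ell,j_\ell}$ itself. Doing this for $\ell=2,\ldots,m$ produces a chart of $\overline X_0$ isomorphic to an affine space with the monomials $B_2,\ldots,B_m$ inverted, i.e.\ a product $\A^p\times(\K^*)^q$. Such a chart is \emph{not} flexible once $q\ge 1$: every $\G_a$-action must fix the invertible coordinates, so Theorem~0.2 of \cite{flex-instances} does not apply. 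Passing to $Y=\Proj R_0$ absorbs exactly one copy of $\K^*$, so as soon as $m\ge 3$ (or even $m=2$ with $B_2$ involving two variables) the charts of $Y$ still carry torus factors. Hence the covering you describe is not a covering by flexible charts, and Theorem~\ref{thm:flexible-criterion} cannot be invoked. There are secondary issues as well --- the hypothesis does not assume $X$ complete, so there may be no ample class to produce a positive $\ZZ$-grading; and the cone criterion is formulated for ordinary, not weighted, projective embeddings --- but the torus-factor obstruction is the essential one.

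The paper instead takes your ``robust alternative'' and carries it out explicitly. Given a smooth point $x$, it first shows (via a short lemma) that at most two of the monomials $A_i$ can have all partial derivatives vanishing at $x$; after reindexing, each $B_\ell=\partial A_\ell/\partial T_{\ell,1}$ is nonzero at $x$ for $\ell\ge 2$. The key $\G_a$-action then translates each $T_{0,j}$ and $T_{1,j}$ by $c_{0,j}\prod_{k\ge 2}B_k$ and $c_{1,j}\prod_{k\ge 2}B_k$ respectively, and compensates in each $T_{\ell,1}$ so that every trinomial is preserved. Together with the translations in the $S_i$ and with analogous actions obtained by swapping the roles of $A_1$ and $A_\ell$, the paper shows \emph{transitivity directly}: any smooth point can be moved to any point with all coordinates nonzero. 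Your sketch of the alternative has the right shape, but these explicit derivations and the step-by-step equalization of coordinates are the actual content of the argument; verifying tangent-space spanning at a general point and then spreading via the big torus, as you suggest, would also require producing these derivations first.
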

\begin{proof}
Let $\K[\overline{X}]$ be as in Proposition~\ref{T1-trinom}. 
We assume that $\overline{X}$ is naturally embedded into an affine space $\A^N=\Spec\K[S_1,\ldots,S_N;T_{\ell,j}\mid 0\le \ell\le m, 1\le j\le n_\ell].$

The images of monomials $A_0,\ldots,A_m$ in $\K[\overline{X}]$ 
span a two-dimensional subspace, and no two of them are
collinear. Therefore, we may permute $A_0,\ldots,A_m$ along with a proper
change of their coefficients, indices of variables, and numbers $z_i$.

\begin{lemma}
The point $x\in \overline{X}$ is singular if and only if there are at least three monomials $A_i$ such that all their partial derivatives are vanishing at $x$.
\end{lemma}
\begin{proof}
Let $x$ be singular and denote $L_\ell=z_\ell \cdot A_0 +  A_1 + A_\ell$ for $\ell=2,\ldots,m,$
then there is a non-trivial linear combination $L\in\langle L_2,\ldots,L_m\rangle_\K$, whose partial derivatives vanish at $x$. 
Since $L$ is a sum of at least three monomials $A_i$, whose  partial derivatives also vanish, the statement follows.

Conversely, given three monomials with partial derivatives vanishing at $x$, we assume that they are $A_0,A_1,A_2$.
Then we take $L=L_2$.
\end{proof}

So, for a smooth point $x\in X$, up to permutation of monomials and variables we assume that each monomial $A_i,\,i=2,\ldots,m,$
 has a non-zero partial derivative, say, $\frac{\partial A_i}{\partial T_{i, 1}}(x)\neq 0$.
 Moreover, we may choose $T_{i,1}$  to be linear in $A_i$. Indeed, if $T_{i,1}$ is not linear, then $A_i(x)\neq0$,
and we take $T_{i,1}$ to be a linear variable by \ref{T1-trinom}.(\ref{T1-trinom-lin}).
We denote $B_i=\frac{\partial A_i}{\partial T_{i,1}}$, which is non-zero at $x$.

Given a set of arbitrary numbers $c_{0,1},\ldots,c_{0,n_0},c_{1,1},\ldots, c_{1,n_1}\in\KK,$ we  construct a $\G_a$-action $\phi$ on $\A^N$ in two steps. 
First, denoting a parameter of $\G_a$ by $t$, we let
\begin{align*}
\phi^*\colon \KK[\A^N]&\to\KK[\A^N]\otimes\KK[t],&\\
  S_i &\mapsto S_i, &\mbox{ for } i=1,\ldots,n_S,\\
 T_{0,j}&\mapsto T_{0,j}+t c_{0,j}\prod_{k=2}^m B_k,&\mbox{ for } j=1,\ldots,n_0,\\
 T_{1,j}&\mapsto T_{1,j}+t c_{1,j}\prod_{k=2}^m B_k,&\mbox{ for } j=1,\ldots,n_1.\\
\end{align*}
Then, for some $H_0,H_1\in\K[\A^N]\otimes\KK[\G_a]$,
\begin{align*}
\phi^*(A_0)=&A_0+ H_0\prod_{k=2}^m B_k,\\
\phi^*(A_1)=&A_1+ H_1\prod_{k=2}^m B_k.
\end{align*}
Now, for each $\ell=2,\ldots,m$ we let
\begin{align*}
 T_{\ell,1}&\mapsto T_{\ell,1}-(z_\ell\cdot H_0+H_1)\prod_{\substack{2 \leq k\leq m\\ k\neq \ell}} B_k,\\
 T_{\ell,j}&\mapsto T_{\ell,j},\qquad\mbox{ for } j=2,\ldots,n_\ell.\\
\end{align*}
Then the trinomial $z_\ell\cdot A_0+A_1+A_\ell$ is fixed by $\phi^*$, so $\phi$ preserves $\overline{X}\subset\A^N.$ Thus, we have constructed a $\G_a$-action on the total coordinate space $\overline{X}$, which we also denote by $\phi$.

As said before, for a chosen smooth point $x$ we have $B_i(x)\neq 0$.
Let us take another smooth point $y\in\overline{X}$ with non-zero coordinates and
move $x$ to $y$ by $\G_a$-actions, denoting images of $x$ by same letter.
By translations along coordinates $S_1,\ldots,S_{n_S}$ we `equalize' them, i.e., obtain $S_i(x)=S_i(y)$, $i=1,\ldots,n_S.$ 

Since $c_{0,1},\ldots,c_{0,n_0},c_{1,1},\ldots, c_{1,n_1}\in\KK$ are arbitrary, with $\phi$
we also equalize coordinates $T_{0,j},\,j=1,\ldots,n_0,$ and $T_{1,j},\,j=1,\ldots,n_1,$ at $x$ and $y$.
Now, let $T_{1,1}$ be a linear variable in $A_1$, then for each $\ell=2,\ldots,m$ we construct 
a $\G_a$-action $\phi_\ell$ by permuting monomials $A_1$ and $A_\ell$ and applying the procedure above. 
Since $\frac{\partial A_1}{\partial T_{1,1}}(x)=\frac{\partial A_1}{\partial T_{1,1}}(y)\neq0$,  
with $\phi_\ell$ we may equalize coordinates $T_{\ell,2},\ldots,T_{\ell,m}$, 
but break the equality of coordinate $T_{1,1}$, which we restore with $\phi$. 

Proceeding this way for each $\ell=2,\ldots,m$, we equalize all coordinates except  $T_{2,1},\ldots,T_{m,1}$.
But in this case the equation $z_\ell\cdot A_0+A_1+A_\ell=0$ with condition $B_\ell(x),B_\ell(y)\neq0$ implies $T_{\ell,1}(x)=T_{\ell,1}(y)$ for each $\ell$.
So, we may send any smooth to any point with non-zero coordinates, hence
 the action of $\SAut \overline{X}$ is transitive on smooth points of $\overline{X}$.
\end{proof}
\coxTflex
\begin{proof}
By \cite[Thm~A.1]{APS13}, every smooth complete rational T-variety of complexity one is covered by affine spaces, so the statement follows from Proposition~\ref{T-Cox-eq-flex}.
\end{proof}

\subsection{Flexibility of total coordinate spaces vs. flexible coverings}
In \cite{APS13} it was proved that for a variety with an open covering by affine spaces one obtains flexibility of the universal torsor. 
However, it is not clear whether the flexibility property extends to the total coordinate space. 
This motivates the following even more general question.
\begin{question}
  Provided a variety admits a covering by flexible affine subsets, does this imply flexibility of the total coordinate space?
\end{question}

It is also very tempting to try to connect flexibility of the total coordinate space of the Cox ring of $X$ with that of affine cones over $X$.  
The following example shows that flexibility of the total coordinate space does not imply flexibility of all affine cones.

\begin{example}[Del Pezzo surfaces]
  We have seen in Section~\ref{sec:del-pezzo-surfaces} that all total coordinate spaces of del Pezzo surfaces are flexible. 
  On the other hand, del Pezzo surfaces are covered by affine spaces, which are flexible. 
  Concerning flexibilty of affine cones it was shown in \cite{flexcones, zbMATH06551221} that for degree 4 and 5 all affine cones are flexible, 
  but by \cite{zbMATH06290382,cheltsov16} the anti-canonical cones over del Pezzo surfaces of degree 3, 2, and 1 are not flexible. 
\end{example}


One may still ask if flexibility of all affine cones implies flexibility of the total coordinate space or for a more subtle relation, e.g.~involving the grading of the Cox ring. 
\begin{question}
  Is there a relation between flexibility of the total coordinate space of $X$ and the fact that \emph{all} affine cones over $X$ are flexible?
\end{question}

Let us give some illustrating examples for these questions.

\begin{example}[Toric varieties]
  The Cox ring of a toric variety is polynomial. Hence, the total coordinate space is flexible. On the other hand,
  the torus invariant affine charts and also the affine cones of a toric variety are again toric and hence flexible by \cite[Theorem 0.2]{flex-instances}.
\end{example}

\begin{example}[Blowups of $\PP^n$ in cubic hypersurfaces inside hyperplanes]
  The blowup constructions from Example~\ref{exp:blowup} give varieties for which all affine cones are flexible, as we have seen. 
  On the other hand, the total coordinate space is flexible, as wee see in the following.

We can consider the $k^*$-action on $\PP^n$ given by multiplication with the coordinate $x_0$.  
It comes with a natural quotient map to $\PP^{n-1}$ being defined outside the isolated fixed point $(1\mathbin:0:\ldots:0)$. 
Then the centers of our blowups are fixed points of the action and we obtain induced actions on $X$ and $X'$ with natural quotient maps given by composition of the original quotient map with the blowup. 
Now, we may use Theorem~1.2 in \cite{cox} to calculate the Cox rings
 \[\Cox(X) \cong \K[T_0,\ldots,T_{n},T_n',S_1]/(T_nT_n'-f(T_0,\ldots,T_{n-1},0))\] and 
\[\Cox(X') \cong \K[T_0,\ldots,T_{n},T_n',S_1,S_2]/(T_nT_n'-f(T_0,\ldots,T_{n-1},0))).\]
We see that they are suspensions over an affine space and hence flexible by Theorem~0.2 in \cite{flex-instances}.
\end{example}



\begin{example}[T-varieties of complexity one] 
  The proof of Theorem~\ref{T1-flex} implies that for T-varieties of complexity one the condition of being covered by toric (and hence flexible) charts is enough to obtain flexibility of the total coordinate space. 
  On the other hand, to conclude flexibility of all affine cones we had to impose the stronger (technical) condition of  Theorem \ref{thm:all-flexible}.
\end{example}

\bibliographystyle{alpha}
\bibliography{tflex}

\end{document}